\begin{document}

\markboth{Alexander Dubbs and Alan Edelman}
{The Beta-MANOVA Ensemble with General Covariance}

%
\catchline{}{}{}{}{}
%

\title{The Beta-MANOVA Ensemble with General Covariance}

\author{Alexander Dubbs}

\address{Mathematics, MIT, Massachusetts Ave. and Vassar St.\\
Cambridge, MA 02139, United States of America, \\
\email{alex.dubbs@gmail.com}}

\author{Alan Edelman}

\address{Mathematics, MIT, Massachusetts Ave. and Vassar St.\\
Cambridge, MA 02139, United States of America, \\
\email{edelman@math.mit.edu}}

\maketitle

\begin{history}
\received{\today}
\revised{?}
\end{history}

\begin{abstract}
We find the joint generalized singular value distribution and largest generalized singular value distributions of the $\beta$-MANOVA ensemble with positive diagonal covariance, which is general. This has been done for the continuous $\beta > 0$ case for identity covariance (in eigenvalue form), and by setting the covariance to $I$ in our model we get another version. For the diagonal covariance case, it has only been done for $\beta = 1,2,4$ cases (real, complex, and quaternion matrix entries). This is in a way the first second-order $\beta$-ensemble, since the sampler for the generalized singular values of the $\beta$-MANOVA with diagonal covariance calls the sampler for the eigenvalues of the $\beta$-Wishart with diagonal covariance of Forrester and Dubbs-Edelman-Koev-Venkataramana. We use a conjecture of MacDonald proven by Baker and Forrester concerning an integral of a hypergeometric function and a theorem of Kaneko concerning an integral of Jack polynomials to derive our generalized singular value distributions. In addition we use many identities from Forrester's {\it Log-Gases and Random Matrices}. We supply numerical evidence that our theorems are correct.
\end{abstract}

\keywords{Finite random matrix theory; Beta-ensembles; MANOVA.}

\ccode{Mathematics Subject Classification 2000: 22E46, 53C35, 57S20}

\section{Introduction}	

The first $\beta$-ensembles were introduced by Dumitriu and Edelman \cite{Dumitriu2002}, the $\beta$-Hermite ensemble and the $\beta$-Laguerre ensemble. A $\beta$-\underline{\indent} ensemble is defined to be a real random matrix with a nonrandom continuous tuning parameter $\beta > 0$ such that when $\beta = 1,2,4$, the $\beta$-\underline{\indent} ensemble has the same joint eigenvalue distribution as the real, complex, or quaternionic \underline{\indent}-ensemble. For $\beta$ not equal to $1,2,4$ its eigenvalue distribution interpolates naturally among the $\beta = 1,2,4$ cases. A $\beta$-circular ensemble and four $\beta$-Jacobi ensembles shortly followed \cite{Lippert2003}, \cite{Killip2004}, \cite{Forrester2005}, \cite{Edelman2008}. The extreme eigenvalues of the $\beta$-Jacobi ensembles were characterized by Dumitriu and Koev in \cite{Dumitriu2008}. More recently, Forrester \cite{Forrester2011} and Dubbs-Edelman-Koev-Venkataramana \cite{Dubbs2013} separately introduced a $\beta$-Wishart ensemble with diagonal covariance, which generalizes the $\beta$-Laguerre ensemble by adding the covariance term.

This paper introduces the $\beta$-MANOVA ensemble with diagonal covariance, which generalizes the $\beta$-Jacobi ensembles by adding the covariance term. When $\beta = 1$ this amounts to finding the distribution of the cosine generalized singular values of the pair $(Y,X\Omega)$, where $X$ is $m\times n$ Gaussian, $Y$ is $p\times n$ Gaussian, and $\Omega$ is $n\times n$ diagonal pds. Note that forcing $\Omega$ to be diagonal does not lose any generality; using orthogonal transformations, were $\Omega$ not diagonal we could replace it with its diagonal matrix of eigenvalues and preserve the model. Our $\beta$-MANOVA ensemble also generalizes the real, complex, and quaternionic MANOVA ensembles (the last of which has never been studied). \cite{Fisher1939}, \cite{Hsu1939}, and \cite{Roy1939} independently solved the $\beta = 1$ identity-covariance case, \cite{Constantine1963} solved our problem in the $\beta = 1$, general-covariance case, and \cite{James1964} solved our problem in the $\beta = 2$, general-covariance case. We find the joint eigenvalue distribution of the $\beta$-MANOVA ensemble, and generalize Dumitriu and Koev's results in \cite{Dumitriu2008} by finding the distribution of the largest generalized singular value of the $\beta$-MANOVA ensemble. We also set the covariance to the identity to add a fourth $\beta$-Jacobi ensemble to the literature in Theorem 3.1. Our $\beta$-MANOVA ensemble is unique in that it is not built on a recursive procedure, rather it is sampled by calling the sampler for the $\beta$-Wishart ensemble.

Generalizations of our results exist in the $\beta = 1,2$ cases by adding a mean matrix to one of the Wishart-distributed parameters. The $\beta = 1$ case is from \cite[p. 1279]{Constantine1963} and the $\beta = 2$ case is from \cite[p. 490]{James1964}.

The sampler for the $\beta$-Wishart ensemble of Forrester \cite{Forrester2011} and Dubbs-Edelman-Koev-Venkataramana \cite{Dubbs2013} is the following algorithm:

\vskip .1 in
\fbox{
\begin{minipage}{4.4 in}
\begin{center}
\underline{Beta-Wishart (Recursive) Model Pseudocode}
\end{center}
\begin{algorithmic}
\State \textbf{Function} $\Sigma$ := BetaWishart$(m,n,\beta,D)$
\If{$n = 1$}
	\State $\Sigma$ := $\chi_{m\beta}D_{1,1}^{1/2}$
\Else
	\State $Z_{1:n-1,1:n-1}$ := BetaWishart$(m,n-1,\beta,D_{1:n-1,1:n-1})$
	\State $Z_{n,1:n-1}$ := $[0,\ldots,0]$
	\State $Z_{1:n-1,n}$ := $[\chi_\beta D_{n,n}^{1/2};\ldots;\chi_{\beta}D_{n,n}^{1/2}]$
	\State $Z_{n,n}$ := $\chi_{(m-n+1)\beta}D_{n,n}^{1/2}$
	\State $\Sigma$ := diag(svd($Z$))
\EndIf
\end{algorithmic}
\end{minipage}
} \vskip .1 in

The elements of $\Sigma$ are distributed according to the following theorem of \cite{Forrester2011} and \cite{Dubbs2013}:

\begin{proposition}
The distribution of the singular values ${\rm diag}(\Sigma) = (\sigma_1,\ldots,\sigma_n)$, $\sigma_1>\sigma_2>\cdots>\sigma_n$, generated by the above algorithm is equal to:
$$ \frac{2^{n}\det(D)^{-m\beta/2}}{\mathcal{K}_{m,n}^{(\beta)}}\prod_{i=1}^{n}\sigma_i^{(m-n+1)\beta-1}\Delta^2(\sigma)^\beta {{{}_0F_0}}^{\!\!(\beta)}\left(-\frac{1}{2}\Sigma^2,D^{-1}\right)d\sigma, $$
where ${{{}_0F_0}}^{\!\!(\beta)}$ and $\mathcal{K}^{(\beta)}_{m,n}$ are defined in the upcoming section, Preliminaries.
\end{proposition}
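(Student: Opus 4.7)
I would proceed by induction on $n$. The base case $n = 1$ is a direct computation: the algorithm returns $\sigma_1 = \chi_{m\beta} D_{1,1}^{1/2}$, whose density, after the scalar change of variables, is proportional to $D_{1,1}^{-m\beta/2} \sigma_1^{m\beta-1} \exp(-\sigma_1^2/(2 D_{1,1}))$. In one variable $\Delta(\sigma)$ is empty and ${{{}_0F_0}}^{\!\!(\beta)}(-\tfrac12 \sigma_1^2, D_{1,1}^{-1})$ collapses to $\exp(-\sigma_1^2/(2 D_{1,1}))$, so the claimed formula holds once $\mathcal{K}^{(\beta)}_{m,1}$ is matched to the chi normalization.

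For the inductive step, condition on the diagonal $T = \mathrm{diag}(\tau_1, \ldots, \tau_{n-1})$ returned by the recursive call, which by hypothesis has the stated density with parameters $(m, n-1, \beta, D_{1:n-1,1:n-1})$. The appended column entries $v_i = \chi_\beta D_{n,n}^{1/2}$ and corner $w = \chi_{(m-n+1)\beta} D_{n,n}^{1/2}$ are independent, so the joint density of $(\tau, v, w)$ factors into the inductive ${{{}_0F_0}}^{\!\!(\beta)}$ piece times an explicit polynomial-exponential factor in $(v, w)$. Moreover $Z^T Z$ is an arrow matrix with diagonal block $T^2$, border $Tv$, and corner $\|v\|^2 + w^2$; its eigenvalues $\sigma_1^2 > \cdots > \sigma_n^2$ therefore interlace with $\tau_1^2, \ldots, \tau_{n-1}^2$ and satisfy the classical secular equation.

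The main calculation is then a change of variables from $(v, w)$ to $\sigma$ at $\tau$ fixed, followed by integration over $\tau$. In the $\beta = 1, 2, 4$ cases the Jacobian is the standard arrow-matrix Jacobian and the integration reduces to a Selberg/Gauss-type identity; for general $\beta$ one obtains a beta-deformed analogue by expanding ${{{}_0F_0}}^{\!\!(\beta)}(-\tfrac12 T^2, D_{1:n-1,1:n-1}^{-1})$ in Jack polynomials and applying a branching identity relating Jack polynomials in $n$ and $n-1$ variables, combined with a Kaneko-type integration over the interlacing region. The chi-density exponentials in $(v, w)$ then combine with the inductive ${{{}_0F_0}}^{\!\!(\beta)}$ to assemble the full ${{{}_0F_0}}^{\!\!(\beta)}(-\tfrac12 \Sigma^2, D^{-1})$ in $n$ variables, while the Vandermonde and power prefactors rearrange correctly. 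The main obstacle is precisely this Jack-polynomial step: producing, for non-integer $\beta$, the correct beta-deformed arrow-matrix ``Jacobian'' and the matching integration formula that folds the $\tau$ and $(v, w)$ contributions into the target density. This is where the symmetric-function machinery catalogued in Forrester's \emph{Log-Gases and Random Matrices} becomes indispensable.
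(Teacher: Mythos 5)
You should first note that the paper does not actually prove this proposition: it is imported verbatim from Forrester and from Dubbs--Edelman--Koev--Venkataramana, so the only meaningful comparison is with the proofs in those source papers. Your overall plan --- induction on $n$, conditioning on the diagonal $T$ returned by the recursive call, observing that $Z^TZ$ is an arrowhead matrix with block $T^2$, border $Tv$ and corner $\|v\|^2+w^2$, hence interlacing, then changing variables from $(v,w)$ to $\sigma$ at fixed $\tau$ and integrating out $\tau$ --- is exactly the strategy of those papers, and your base case $n=1$ is handled correctly (the $\chi_{m\beta}$ density matches the stated constant since $\mathcal{K}^{(\beta)}_{m,1}=2^{m\beta/2}\Gamma(m\beta/2)$ and the $1\times 1$ hypergeometric collapses to the exponential).

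The gap is that the entire mathematical content of the inductive step is the part you defer as an ``obstacle,'' and your description of what is missing is partly misdirected. First, there is no beta-deformed arrow-matrix Jacobian to be produced: the matrix $Z$ built by the algorithm is a real matrix for every $\beta>0$ (that is the whole point of the recursive construction), so the change of variables from the $n$ real quantities $(v_1,\ldots,v_{n-1},w)$ to $(\sigma_1,\ldots,\sigma_n)$ at fixed $\tau$ is one fixed real Jacobian computation, valid for all $\beta$; the parameter $\beta$ enters only through the $\chi_\beta$ and $\chi_{(m-n+1)\beta}$ densities of the new entries. Second, the identity needed to fold the resulting integral over the interlacing region back into ${{}_0F_0}^{\!\!(\beta)}\!\left(-\tfrac12\Sigma^2,D^{-1}\right)$ is not a Kaneko/Selberg-type integral over a cube but a Dixon--Anderson/Anderson-type integral over the interlacing region --- equivalently, the formula expressing a Jack polynomial $C_\kappa^{(\beta)}$ in $n$ variables as an integral of Jack polynomials in $n-1$ interlacing variables against the Dixon--Anderson density (an Okounkov--Olshanski-type branching integral). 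This is precisely the lemma on which Forrester's and Dubbs et al.'s arguments turn, applied term by term to the Jack expansion of the two-argument ${{}_0F_0}^{\!\!(\beta)}$. Until you state and prove (or correctly cite) that integral identity and carry out the explicit arrowhead Jacobian, what you have is an accurate outline of the known proof rather than a proof.
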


To get the generalized singular values of the $\beta$-MANOVA ensemble with general covariance, in diagonal $C$, we use the following algorithm which calls ${\rm BetaWishart}(m,n,\beta,D)$. Let $\Omega$ be an $n\times n$ diagonal matrix.

\vskip .1 in
\fbox{
\begin{minipage}{4.4 in}
\begin{center}
\underline{Beta-MANOVA Model Pseudocode}
\end{center}
\begin{algorithmic}
\State \textbf{Function} $C$ := BetaMANOVA$(m,n,p,\beta,\Omega)$
\State $\Lambda := {\rm BetaWishart}(m,n,\beta,\Omega^2)$
\State  $M := {\rm BetaWishart}(p,n,\beta,\Lambda^{-1})^{-1}$
\State $C:= (M + I)^{-\frac{1}{2}}$
\end{algorithmic}
\end{minipage}
} \vskip .1 in

Our main theorem is the joint distribution of the elements of $C$,

\begin{theorem}
The distribution of the generalized singular values ${\rm diag}(C) = (c_1,\ldots,c_n)$, $c_1 > c_2 > \cdots > c_n$, generated by the above algorithm for $m,p \geq n$ is equal to:
\begin{multline*}
\frac{2^n\mathcal{K}_{m+p,n}^{(\beta)}}{\mathcal{K}_{m,n}^{(\beta)}\mathcal{K}_{p,n}^{(\beta)}}\det(\Omega)^{p\beta}\prod_{i=1}^{n}c_i^{(p-n+1)\beta-1}\prod_{i=1}^{n}(1-c_i^2)^{-\frac{p+n-1}{2}\beta-1}\prod_{i<j}|c_i^2-c_j^2|^{\beta} \\ \times{{}_1F_0}^{\!\!(\beta)}\left(\frac{m+p}{2}\cdot\beta;;C^2(C^2-I)^{-1},\Omega^2\right)dc.
\end{multline*}
where ${{{}_1F_0}}^{\!\!(\beta)}$ and $\mathcal{K}^{(\beta)}_{m,n}$ are defined in the upcoming section, Preliminaries.
\end{theorem}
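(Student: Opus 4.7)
The plan is to argue in three stages. First, write down the joint density of the intermediate variables produced by the algorithm. Let $\lambda_i$ denote the diagonal entries of $\Lambda$ and $\nu_i$ the diagonal entries of $M^{-1}$; by the call structure of \textbf{BetaMANOVA} and the independence of the fresh randomness used in each \textbf{BetaWishart} call, Proposition 1.1 applied twice — once with parameters $(m,n,\beta,\Omega^2)$ and once with parameters $(p,n,\beta,\Lambda^{-1})$ — gives
$$p(\lambda,\nu) \;=\; p_\Lambda(\lambda)\cdot p_{M^{-1}\mid \Lambda}(\nu\mid\lambda),$$
with both factors explicit. In particular the factor $\det(\Lambda^{-1})^{-p\beta/2}=\det(\Lambda)^{p\beta/2}$ appears and will combine with the $\lambda$-powers in the first density to enlarge the exponent on each $\lambda_i$.

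Second, change variables from $\nu$ to $c$ via the monotone map $\nu_i = c_i^2/(1-c_i^2)$ on $(0,1)$. The Jacobian is $\prod_i 2c_i/(1-c_i^2)^2$; the Vandermonde transforms as $\Delta(\nu) = \Delta(c^2)\big/\prod_k(1-c_k^2)^{n-1}$; and the matrix argument in the second hypergeometric becomes $N^2 = C^4(I-C^2)^{-2}$. Collecting the powers of $c_i$ and $(1-c_i^2)$ at this stage should already reproduce the explicit $c$- and $(1-c^2)$-factors in the target formula.

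Third, integrate out $\lambda$. The remaining $\lambda$-integral has the shape
$$\int_{\lambda_1>\cdots>\lambda_n>0}\!\prod_i \lambda_i^{a-1}\,\Delta(\lambda)^{2\beta}\;{{}_0F_0^{(\beta)}}\!\bigl(-\tfrac12\Lambda^2,\Omega^{-2}\bigr)\;{{}_0F_0^{(\beta)}}\!\bigl(-\tfrac12 N^2,\Lambda\bigr)\,d\lambda$$
for an explicit exponent $a$ depending on $m,p,n,\beta$. Expand both hypergeometrics as Jack-polynomial double series, swap sum and integral, evaluate the $\lambda$-integral of the Vandermonde weight against a Jack polynomial using Kaneko's Selberg-type integral from Forrester's \emph{Log-Gases and Random Matrices}, and then resum the resulting double series into a single ${}_1F_0^{(\beta)}$ with matrix arguments $\Omega^2$ and $C^2(C^2-I)^{-1}$ using the Macdonald integral identity in the form proved by Baker and Forrester.

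The main obstacle is clearly the $\lambda$-integral. The two hypergeometrics depend on $\lambda$ asymmetrically — one through $\Lambda^2$, the other through $\Lambda$ itself — so the Jack-polynomial expansions have to be set up carefully so that Kaneko's identity applies to the right pairing of arguments, and so that the subsequent Baker-Forrester resummation produces exactly the symbol $[(m+p)\beta/2]_\kappa^{(\beta)}$ required for the ${}_1F_0^{(\beta)}$. Matching the overall constant against $\mathcal{K}^{(\beta)}_{m+p,n}/(\mathcal{K}^{(\beta)}_{m,n}\mathcal{K}^{(\beta)}_{p,n})\cdot\det(\Omega)^{p\beta}$ is then a bookkeeping exercise using the closed-form Selberg-type prefactors supplied by these two identities.
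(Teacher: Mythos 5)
Your overall architecture does match the paper's: write the density of $\Lambda$ and the conditional density of $M$ given $\Lambda$ from Proposition 1.1, integrate out $\lambda$, then change variables to $C=(M+I)^{-1/2}$. The genuine gap is in how you propose to do the $\lambda$-integral. Kaneko's result (Proposition 2.3) is a Selberg/Jacobi-weight integral over the bounded domain $0<X<I$ with weight $\prod x_i^a(1-x_i)^b$; it says nothing about your integral over $\lambda\in(0,\infty)^n$ against a ${}_0F_0^{(\beta)}$ Laplace-type weight, and in the paper it is used only for Theorem 1.2 (where the domain really is $N<I$). Moreover, "expand both hypergeometrics and swap sum and integral" does not work: once you expand ${}_0F_0^{(\beta)}(-\tfrac12\Lambda,\Omega^{-2})$ into Jack polynomials, every term of the double series is a polynomial in $\lambda$, so the termwise integrals over the unbounded domain diverge, and even formally you would need the linearization of a product $C_\kappa^{(\beta)}(\Lambda)C_\mu^{(\beta)}(\Lambda)$ into Jack polynomials, which is not available in usable form. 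The identity that actually evaluates the integral is the Macdonald/Baker--Forrester formula (Proposition 2.1), which is precisely a Laplace-transform evaluation of $\int_{X>0}{}_0F_0^{(\beta)}(-X,Y)\,|X|^a\,C_\kappa^{(\beta)}(X)\,|\Delta(X)|^\beta\,dX$: expand only the factor ${}_0F_0^{(\beta)}(-\tfrac12 M^{-1},\Lambda)$, keep the other ${}_0F_0^{(\beta)}$ intact as the decaying weight, apply Proposition 2.1 termwise to produce $\bigl(\tfrac{m+p}{2}\beta\bigr)_\kappa^{(\beta)}C_\kappa^{(\beta)}(2\Omega^2)$ with an explicit constant, and the passage to ${}_1F_0^{(\beta)}\bigl(\tfrac{m+p}{2}\beta;;-M^{-1},\Omega^2\bigr)$ is then just the definition of the hypergeometric series; Baker--Forrester is not a resummation lemma, and assigning it that role while assigning the integral to Kaneko leaves the central step of the proof unsupported.

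There are also bookkeeping errors that would derail the computation even if the right integral identity were used. In eigenvalue form the two densities carry $\prod_{i<j}|\lambda_i-\lambda_j|^\beta$ and hypergeometric arguments $-\tfrac12\Lambda$ and $-\tfrac12 M^{-1}$ (equivalently, in singular-value form, $\Delta(\sigma^2)^\beta$ and $-\tfrac12\Sigma^2$); your weight $\Delta(\lambda)^{2\beta}$ combined with the argument $-\tfrac12\Lambda^2$ mixes the two conventions and is not the density of Proposition 1.1 in either variable. Likewise, with $\nu_i$ the entries of $M^{-1}$ and $\nu_i=c_i^2/(1-c_i^2)$, the conditional density involves ${}_0F_0^{(\beta)}(-\tfrac12 N,\Lambda)$ with $N=C^2(I-C^2)^{-1}$, not $-\tfrac12 N^2$; carrying $N^2=C^4(I-C^2)^{-2}$ through the argument would land you on a final hypergeometric argument $C^4(C^2-I)^{-2}$ instead of the $C^2(C^2-I)^{-1}$ appearing in the statement. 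These inconsistencies have to be fixed before the constant matching against $\mathcal{K}_{m+p,n}^{(\beta)}/(\mathcal{K}_{m,n}^{(\beta)}\mathcal{K}_{p,n}^{(\beta)})\cdot\det(\Omega)^{p\beta}$ can even begin.
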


We also find the distributions of the largest generalized singular value in certain cases:
\begin{theorem}
If $t = (m-n+1)\beta/2-1 \in \mathbb{Z}_{\geq 0}$,
\begin{multline}
P(c_1 < x) = \det(x^2\Omega^2((1-x^2)I+x^2\Omega^2)^{-1})^{\frac{p\beta}{2}}\\ \times \sum_{k=0}^{nt}\sum_{\kappa\vdash k,\kappa_1\leq t}\frac{1}{k!}(p\beta/2)_\kappa^{(\beta)}C_\kappa^{(\beta)}\left((1-x^2)((1-x^2)I+x^2\Omega^2)^{-1}\right),
\end{multline}
where the Jack function $C_\kappa^{(\beta)}$ and Pochhammer symbol $(\cdot)_\kappa^{(\beta)}$ are defined in the upcoming section, Preliminaries.
\end{theorem}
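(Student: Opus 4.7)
The plan is to compute $P(c_1<x)$ by integrating the joint density of Theorem 2 over the ordered simplex $\{0<c_n<\cdots<c_1<x\}$, via a Jack-polynomial expansion of ${}_1F_0^{(\beta)}$ followed by a termwise application of Kaneko's integral theorem.

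First I would expand
\[
{}_1F_0^{(\beta)}\!\!\bigl(\tfrac{(m+p)\beta}{2};;C^2(C^2-I)^{-1},\Omega^2\bigr)=\sum_{\kappa}\frac{((m+p)\beta/2)_\kappa^{(\beta)}}{|\kappa|!\,C_\kappa^{(\beta)}(I_n)}\,C_\kappa^{(\beta)}\!\bigl(C^2(C^2-I)^{-1}\bigr)\,C_\kappa^{(\beta)}(\Omega^2),
\]
pulling all $\Omega$-dependence out of the integral. A change of variables $s_i=c_i^2$ followed by $s_i=x^2 u_i$ then sends the ordered domain to $\{0<u_n<\cdots<u_1<1\}$, and combined with the Jack-polynomial homogeneity $C_\kappa^{(\beta)}(\lambda X)=\lambda^{|\kappa|}C_\kappa^{(\beta)}(X)$ it turns each term into the standard Kaneko integral
\[
\int_{[0,1]^n}\prod_i u_i^{a-1}(1-x^2 u_i)^{b-1}|\Delta(u)|^\beta\,C_\kappa^{(\beta)}\!\bigl(u/(1-x^2 u)\bigr)\,du,
\]
with $a,b$ read off from the exponents of the density (each of the form $\tfrac{k\beta}{2}+\text{integer}$ in $m,n,p,\beta$).

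Step 3 invokes Kaneko's theorem, which evaluates these Selberg--Jack integrals in closed form as a Selberg-type constant times a ratio of generalized Pochhammer symbols and a ${}_2F_1^{(\beta)}$-like polynomial in $x^2$. The Selberg constant cancels against the normalizing prefactor $\mathcal{K}^{(\beta)}_{m+p,n}/(\mathcal{K}^{(\beta)}_{m,n}\mathcal{K}^{(\beta)}_{p,n})$ of Theorem 2 via the standard generalized-Gamma identities from Forrester's \emph{Log-Gases and Random Matrices}, and, using the inversion $\xi\mapsto\xi/(1+\xi)$ together with Jack-polynomial multiplicativity, the remaining $x$- and $\Omega$-dependence assembles into the factor $\det(x^2\Omega^2((1-x^2)I+x^2\Omega^2)^{-1})^{p\beta/2}$ outside the sum and the Jack polynomial $C_\kappa^{(\beta)}((1-x^2)((1-x^2)I+x^2\Omega^2)^{-1})$ inside it.

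Finally, the truncation $\kappa_1\leq t$ (and hence $|\kappa|\leq nt$) comes from a Pochhammer factor of the form $(-t)_{\kappa_1}^{(\beta)}$ that appears in Kaneko's evaluation: when $t=(m-n+1)\beta/2-1\in\mathbb{Z}_{\geq 0}$, this factor vanishes exactly for $\kappa_1>t$, producing the finite sum in the statement. I expect the main obstacle to be the algebraic bookkeeping in Step 3, because several Jack-polynomial identities (homogeneity, duality, and the explicit $C_\kappa^{(\beta)}(I_n)$ evaluation) must be assembled in just the right way to reduce Kaneko's raw output to the compact $\det^{p\beta/2}$-times-$C_\kappa^{(\beta)}$ form, and in particular to make the full $m$-dependence collapse into the single truncation parameter $t$ while leaving only $(p\beta/2)_\kappa^{(\beta)}$ in the surviving sum.
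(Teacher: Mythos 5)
Your first half is essentially the paper's own route, just in different coordinates: the paper passes to $H=M^{-1}$, i.e. $\eta_i=c_i^2/(1-c_i^2)$, where after rescaling the termwise integrals are \emph{literally} Proposition 2.3 (Kaneko) with $b=0$. Your displayed integral, with weight $(1-x^2u_i)^{b-1}$ and Jack argument $u/(1-x^2u)$, is not Kaneko's integral as it stands; it only becomes one after the M\"obius substitution you merely gesture at (``the inversion $\xi\mapsto\xi/(1+\xi)$''), and ``Jack-polynomial multiplicativity'' is not an available tool ($C_\kappa^{(\beta)}$ is not multiplicative over matrix products), so the determinant prefactor cannot be extracted that way. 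These points are repairable. What is not repairable by bookkeeping is your account of the endgame: termwise Kaneko integration does \emph{not} produce a $(-t)_\kappa^{(\beta)}$ factor. With $b=0$ it yields $(p\beta/2)_\kappa^{(\beta)}/((p+n-1)\beta/2+1)_\kappa^{(\beta)}$, the only $m$-dependence being the factor $((m+p)\beta/2)_\kappa^{(\beta)}$ inherited from the ${}_1F_0^{(\beta)}$ expansion, and the constant does not cancel at this stage either. What you actually reach is the paper's (3.5)--(3.6): a Gamma-ratio constant times $\bigl(\tfrac{x^2}{1-x^2}\bigr)^{np\beta/2}\,{}_2F_1^{(\beta)}\bigl(\tfrac{m+p}{2}\beta,\tfrac{p}{2}\beta;\tfrac{p+n-1}{2}\beta+1;-\tfrac{x^2}{1-x^2}\Omega^2\bigr)$, which is not the statement.

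The missing ideas are exactly the hypergeometric transformations that bridge that expression to the theorem, and you flag this passage as an ``obstacle'' without supplying a mechanism. One needs (i) the Euler/Pfaff-type relation of Proposition 2.4, which simultaneously creates the factor $\det(x^2\Omega^2((1-x^2)I+x^2\Omega^2)^{-1})^{p\beta/2}$ and converts the first parameter to $-t=(n-m-1)\beta/2+1$, whence the truncation $\kappa_1\le t$, $k\le nt$; (ii) the terminating-series relation of Proposition 2.5, which must be applied after perturbing the lower parameter by $\epsilon$ and letting $\epsilon\to0$ (otherwise one faces $(-t)_\kappa^{(\beta)}/(-t)_\kappa^{(\beta)}=0/0$), and which is what flips the Jack argument from $x^2\Omega^2((1-x^2)I+x^2\Omega^2)^{-1}$ to $(1-x^2)((1-x^2)I+x^2\Omega^2)^{-1}$ while stripping off the $-t$ and denominator Pochhammers, leaving only $(p\beta/2)_\kappa^{(\beta)}/k!$; and (iii) Gauss summation, Proposition 2.6, which shows the leftover ${}_2F_1^{(\beta)}(\,\cdot\,;I)$ cancels the generalized-Gamma ratio exactly. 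Without identifying (i)--(iii), your plan stalls at the ${}_2F_1^{(\beta)}$ form above, so as written the proposal has a genuine gap at precisely the step that gives the theorem its clean form.
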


These expressions can be computed by Edelman and Koev's software, {\tt mhg}, \cite{Koev2006}.

It is actually intuitive that ${\rm BetaMANOVA}(m,n,p,\beta,\Omega)$ should generalize the real, complex, and quaternionic MANOVA ensembles with diagonal covariance using the ``method of ghosts.'' The method of ghosts was first used implicitly to derive $\beta$-ensembles for the Laguerre and Hermite cases in \cite{Dumitriu2002}, was stated precisely by Edelman in \cite{Edelman2010}, and was expanded on in \cite{Dubbs2013}. To use the method of ghosts, assume a given ensemble is full of $\beta$-dimensional Gaussians, which generalize real, complex, and quaternionic Gaussians and have some of the same properties: they can be left invariant or made into a $\chi_{\beta}$'s under rotation by a real orthogonal or ``ghost orthogonal'' matrix. Then apply enough orthogonal transformations and/or ghost orthogonal transformations to the ghost matrix to make it all real.

In the $\beta$-MANOVA case, let $X$ be $m\times n$ real, complex, quaternion, or ghost normal, $Y$ be $p \times n$ real complex, quaternion, or ghost normal, and let $\Omega$ be $n\times n$ diagonal real pds. Let $\Omega X^*X\Omega$ have eigendecomposition $U\Lambda U^*$, and $\Omega X^*X\Omega(Y^*Y)^{-1}$ have eigendecomposition $VMV^*$. We want to draw $M$ so we can draw $C = {\rm gsvd}_{(\rm cosine)}(Y,X\Omega) = (M+I)^{-\frac{1}{2}}$. Let $\sim$ mean ``having the same eigenvalues.''
$$ \Omega X^*X\Omega(Y^*Y)^{-1} \sim \Lambda U^*(Y^*Y)^{-1}U \sim \Lambda ((U^*Y^*)(YU))^{-1} \sim \Lambda (Y^*Y)^{-1},$$
which we can draw the eigenvalues $M$ of using ${\rm BetaWishart}(p,n,\beta,\Lambda^{-1})^{-1}$. Since $\Lambda$ can be drawn using ${\rm BetaWishart}(m,n,\beta,\Omega^2)$, this completes the algorithm for ${\rm BetaMANOVA}(m,n,p,\beta,\Omega)$ and proves Theorem 1.1 in the $\beta = 1,2,4$ cases.

The following section contains preliminaries to the proofs of Theorems 1.1 and 1.2 in the general $\beta$ case. Most important are several propositions concerning Jack polynomials and Hypergeometric Functions. Proposition 2.1 was conjectured by Macdonald \cite{Macdonald} and proved by Baker and Forrester \cite{Baker1997}, Proposition 2.3 is due to Kaneko, in a paper containing many results on Selberg-type integrals \cite{Kaneko1993}, and the other propositions are found in \cite[pp. 593-596]{Forrester2010}.

\section{Preliminaries}

\begin{definition}
We define the generalized Gamma function to be
$$ \Gamma_n^{(\beta)}(c) = \pi^{n(n-1)\beta/4}\prod_{i=1}^{n}\Gamma(c-(i-1)\beta/2) $$
for $\Re(c) > (n-1)\beta/2$.
\end{definition}

\begin{definition}
$$ \mathcal{K}_{m,n}^{(\beta)} = \frac{2^{mn\beta/2}}{\pi^{n(n-1)\beta/2}}\cdot\frac{\Gamma_n^{(\beta)}(m\beta/2)\Gamma_n^{\beta}(n\beta/2)}{\Gamma(\beta/2)^n}. $$
\end{definition}

\begin{definition}
$$ \Delta(\lambda) = \prod_{i<j}(\lambda_i-\lambda_j). $$
If $X$ is a diagonal matrix,
$$ \Delta(X) = \prod_{i<j}|X_{i,i}-X_{j,j}|. $$
\end{definition}

As in Dumitriu, Edelman, and Shuman, if $\kappa\vdash k$, $\kappa = (\kappa_1,\kappa_2,\ldots,\kappa_n)$ is nonnegative, ordered non-increasingly, and it sums to $k$. Let $\alpha = 2/\beta$. Let $\rho_\kappa^{\alpha} = \sum_{i=1}^{n}\kappa_i(\kappa_i-1-(2/\alpha)(i-1))$. We define $l(\kappa)$ to be the number of nonzero elements of $\kappa$. We say that $\mu \leq \kappa$ in ``lexicographic ordering'' if for the largest integer $j$ such that $\mu_i = \kappa_i$ for all $i < j$, we have $\mu_j \leq \kappa_j$.

\begin{definition}
As in Dumitriu, Edelman and Shuman, \cite{Dumitriu2007} we define the Jack polynomial of a matrix argument, $C^{(\beta)}_\kappa(X)$, as follows: Let $x_1,\ldots,x_n$ be the eigenvalues of $X$. $C^{(\beta)}_\kappa(X)$ is the only homogeneous polynomial eigenfunction of the Laplace-Beltrami-type operator:
$$ D^*_n = \sum_{i=1}^{n}x_i^2\frac{\partial^2}{\partial x_i^2} + \beta\cdot\sum_{1\leq i\neq j \leq n}\frac{x_i^2}{x_i-x_j}\cdot\frac{\partial}{\partial x_i}, $$
with eigenvalue $\rho_k^{\alpha}+k(n-1),$ having highest order monomial basis function in lexicographic ordering (see Dumitriu, Edelman, Shuman, Section 2.4) corresponding to $\kappa$. In addition,
$$ \sum_{\kappa\vdash k,l(\kappa)\leq n} C_{\kappa}^{(\beta)}(X) = {\rm trace}(X)^k.$$
\end{definition}

\begin{definition}
We define the generalized Pochhammer symbol to be, for a partition $\kappa = (\kappa_1,\ldots,\kappa_l)$
$$ (a)_\kappa^{(\beta)} = \prod_{i=1}^{l}\prod_{j=1}^{\kappa_i}\left(a-\frac{i-1}{2}\beta+j-1\right). $$
\end{definition}

\begin{definition}
As in Koev and Edelman \cite{Koev2006}, we define the hypergeometric function ${{}_pF_q}^{\!\!(\beta)}$ to be
$$ {{}_pF_q}^{\!\!(\beta)}(a;b;X,Y) = \sum_{k=0}^{\infty}\sum_{\kappa\vdash k, l(\kappa)\leq n}\frac{(a_1)_\kappa^{\beta}\cdots (a_p)_\kappa^{\beta}}{(b_1)_\kappa^{(\beta)}\cdots (b_q)_\kappa^{(\beta)}}\cdot\frac{C_\kappa^{(\beta)}(X)C_\kappa^{(\beta)}(Y)}{k!C_\kappa^{(\beta)}(I)}. $$
The best software available to compute this function numerically is described in Koev and Edelman, {\tt mhg}, \cite{Koev2006}. ${{}_pF_q}^{\!\!(\beta)}(a;b;X) = {{}_pF_q}^{\!\!(\beta)}(a;b;X,I)$.
\end{definition}

We will also need two theorems from the literature about integrals of Jack polynomials and hypergeometric functions.

Conjectured by MacDonald \cite{Macdonald}, proved by Baker and Forrester \cite{Baker1997} with the wrong constant, correct constant found using {\it Special Functions} \cite[p. 406]{Andrews1999} (Corollary 8.2.2):

\begin{proposition} Let $X$ be a diagonal matrix.
\begin{multline*}
c_n^{(\beta)}\Gamma_n^{(\beta)}(a+(n-1)\beta/2+1)(a+(n-1)\beta/2+1)_\kappa^{(\beta)}C_\kappa^{(\beta)}(Y^{-1}) \\
= |Y|^{a+(n-1)\beta/2+1}\int_{X > 0}{{}_0F_0}^{\!\!(\beta)}(-X,Y)|X|^aC_\kappa^{(\beta)}(X)|\Delta(X)|^\beta dX,
\end{multline*}
where $c_n^{(\beta)} = \pi^{-n(n-1)\beta/4}n!\Gamma(\beta/2)^n\prod_{i=1}^{n}\Gamma(i\beta/2)$.
\end{proposition}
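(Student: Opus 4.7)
My plan is to prove the identity by recognizing it as the Jack-polynomial generalization of the elementary Gamma integral $\int_0^\infty e^{-xy}x^{a+k}\,dx=\Gamma(a+k+1)y^{-a-k-1}$ and imitating a standard proof in the multivariate setting: view both sides as analytic functions of the diagonal entries $y_1,\ldots,y_n$ of $Y$ on the positive orthant, and match them via a common differential system plus one normalization point.

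First I would handle the isotropic case $Y=tI$, both as a consistency check and to pin down the constant $c_n^{(\beta)}$. The Jack sum rule $\sum_{\kappa\vdash k}C_\kappa^{(\beta)}(X)={\rm trace}(X)^k$ applied inside the defining series of ${{}_0F_0}^{(\beta)}$ collapses the kernel to ${{}_0F_0}^{(\beta)}(-X,tI)=e^{-t\,{\rm trace}(X)}$, so the RHS reduces to the Selberg--Kadell integral
$$\int_{X>0}e^{-t\,{\rm trace}(X)}|X|^a C_\kappa^{(\beta)}(X)|\Delta(X)|^\beta\,dX.$$
Scaling $X\mapsto X/t$ cleans out the $t$-dependence exactly against the prefactor $|tI|^{a+(n-1)\beta/2+1}$ and $C_\kappa^{(\beta)}(t^{-1}I)=t^{-|\kappa|}C_\kappa^{(\beta)}(I)$, so the identity at $Y=tI$ is equivalent to the single numerical evaluation of Andrews--Askey--Roy Corollary 8.2.2, which fixes $c_n^{(\beta)}$.

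For general diagonal $Y>0$ I would argue via the eigenfunction characterization of Jack polynomials. Both sides, as functions of $Y$, are joint eigenfunctions of the commuting family of Calogero--Sutherland--type operators (a $Y^{-1}$ variant of the Laplace--Beltrami operator $D^*_n$ from the preliminaries): on the LHS this is the defining eigenfunction property of $C_\kappa^{(\beta)}(Y^{-1})$, together with a standard gauge twist by $|Y|^{a+(n-1)\beta/2+1}$ that puts the operator into the form naturally produced by the integral; on the RHS it follows because the kernel ${{}_0F_0}^{(\beta)}(-X,Y)$ is symmetric in $X$ and $Y$ and intertwines these operators between the two sets of variables, while $C_\kappa^{(\beta)}(X)$ sitting inside the integral is already a joint eigenfunction in $X$ with matching eigenvalues. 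Uniqueness of Jack polynomials as eigenfunctions with prescribed leading monomial then forces the two sides to agree up to an overall scalar, and that scalar is pinned down by the $Y=tI$ calibration of the previous step (or equivalently by matching the leading behavior as $\min_i y_i\to\infty$, where the integral localizes near $X=0$ and reduces to a product of one-variable Gamma integrals).

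The main obstacle is the bookkeeping for the intertwining: one must verify that each operator in the commuting family really does pass across ${{}_0F_0}^{(\beta)}(-X,Y)$ in the expected way, including the correct shifts induced by conjugation by $|Y|^{a+(n-1)\beta/2+1}$, and that the joint eigenvalue read off from $C_\kappa^{(\beta)}(X)$ matches the one determining $C_\kappa^{(\beta)}(Y^{-1})$. A fallback route that sidesteps the operator machinery is to expand the kernel as a Jack series in $Y$, interchange sum and integral on compacts in the positive orthant (justified by term-by-term integrability of $|X|^a C_\lambda^{(\beta)}(X)C_\kappa^{(\beta)}(X)|\Delta|^\beta$ against the rapidly decaying tail of the series in $Y$), evaluate each piece via Kadell's formula, and resum; but the combinatorial identity required for that resummation is essentially a dual form of the very identity being proved, which is why the operator--theoretic route is preferable.
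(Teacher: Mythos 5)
There is a genuine gap, and it is worth noting first that the paper does not prove this proposition at all: it is quoted as a deep literature result (conjectured by Macdonald, proved by Baker and Forrester, with the constant corrected via Andrews--Askey--Roy, Corollary 8.2.2), so your blind attempt is being measured against an actual research-level proof rather than a short argument in the text. The decisive weakness in your sketch is the step ``uniqueness of Jack polynomials as eigenfunctions with prescribed leading monomial forces the two sides to agree up to a scalar.'' That uniqueness statement (Definition 2.4 of the paper) characterizes $C_\kappa^{(\beta)}$ only within the space of homogeneous symmetric \emph{polynomials} of degree $|\kappa|$ with a specified top monomial; eigenspaces of $D_n^*$ (or of the whole commuting Calogero--Sutherland family) inside all smooth symmetric functions of $y_1,\ldots,y_n$ are far too large for such a conclusion. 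To invoke it you would first have to show that $|Y|^{a+(n-1)\beta/2+1}\int_{X>0}{{}_0F_0}^{\!\!(\beta)}(-X,Y)|X|^aC_\kappa^{(\beta)}(X)|\Delta(X)|^\beta\,dX$ is a homogeneous symmetric polynomial of degree $|\kappa|$ in $y_1^{-1},\ldots,y_n^{-1}$ with leading monomial indexed by $\kappa$ --- but that structural fact is essentially the content of the proposition, so the argument as organized is close to circular. Likewise, the asserted intertwining of the operators across the kernel is not ``bookkeeping'': for general $\beta$ there is no group-integral representation of ${{}_0F_0}^{\!\!(\beta)}$ (unlike $\beta=1,2,4$), and establishing the required transmutation/integration-by-parts identities with the weight $|X|^a|\Delta(X)|^\beta$ is precisely where Macdonald's conjecture resisted proof and what Baker--Forrester's machinery (nonsymmetric Jack theory and generalized Hermite/Laguerre expansions) supplies.

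Your calibration step at $Y=tI$ is sound as far as it goes: the collapse ${{}_0F_0}^{\!\!(\beta)}(-X,tI)=e^{-t\,{\rm trace}(X)}$ follows from the sum rule, and the resulting Laguerre--Jack integral is known (Kadell; this is itself the $Y\propto I$ case of the proposition, so citing it is legitimate but not elementary). However, that only pins down the constant \emph{after} proportionality for general diagonal $Y$ has been established, which is exactly the part left open. Your own fallback (expand the kernel in Jack polynomials, integrate term by term, resum) is, as you acknowledge, equivalent to the identity being proved, because the termwise integrals $\int|X|^aC_\lambda^{(\beta)}(X)C_\kappa^{(\beta)}(X)e^{-{\rm trace}(X)}|\Delta(X)|^\beta dX$ and their resummation encode the same binomial-type structure. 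As written, the proposal is a plausible strategy outline, not a proof; the honest course in the context of this paper is to do what the authors do and cite Baker--Forrester for the identity and Andrews--Askey--Roy for the normalization constant.
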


From \cite[p.593]{Forrester2010},
\begin{proposition}
If $X < I$ is diagonal,
$$ {{}_1F_0}^{\!\!(\beta)}(a;;X) = |I-X|^{-a}. $$
\end{proposition}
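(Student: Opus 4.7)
The identity is the Jack polynomial analog of the scalar binomial series $(1-x)^{-a}=\sum_{k\geq 0}(a)_k x^k/k!$, so the natural plan is to show that both sides admit the same expansion in the Jack polynomial basis $\{C_\kappa^{(\beta)}(X):\kappa\vdash k,\,l(\kappa)\le n\}$ and then match coefficients term by term.

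First, I would unpack the left side using Definition 2.6. Since ${{}_1F_0}^{\!\!(\beta)}(a;;X,I)$ has $C_\kappa^{(\beta)}(X)$ and $C_\kappa^{(\beta)}(I)$ in the numerator and $C_\kappa^{(\beta)}(I)$ in the denominator, those factors cancel, leaving
$$ {{}_1F_0}^{\!\!(\beta)}(a;;X)=\sum_{k=0}^\infty\sum_{\kappa\vdash k,\,l(\kappa)\le n}\frac{(a)_\kappa^{(\beta)}}{k!}\,C_\kappa^{(\beta)}(X). $$
The task thus reduces to proving $\prod_{i=1}^n(1-x_i)^{-a}=\sum_{\kappa}(a)_\kappa^{(\beta)}C_\kappa^{(\beta)}(X)/|\kappa|!$, where $x_1,\ldots,x_n$ are the diagonal entries of $X$.

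Next, I would expand the right side formally: each scalar factor $(1-x_i)^{-a}$ has the classical expansion $\sum_{k_i\geq 0}(a)_{k_i}x_i^{k_i}/k_i!$, and since $X<I$ the product converges absolutely. Its homogeneous piece of total degree $k$ is a symmetric polynomial in $x_1,\ldots,x_n$, and therefore lies in the span of $\{C_\kappa^{(\beta)}:\kappa\vdash k,\,l(\kappa)\le n\}$, because the Jack polynomials form a basis for that space. The problem reduces to identifying the coefficients of the product in that basis.

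The main step is to verify those coefficients equal $(a)_\kappa^{(\beta)}/k!$. The cleanest route is via the Cauchy identity for Jack polynomials, which in the Koev--Edelman $C$-normalization reads $\prod_{i,j}(1-x_iy_j)^{-\beta/2}=\sum_\kappa C_\kappa^{(\beta)}(x)C_\kappa^{(\beta)}(y)/\big(|\kappa|!\,C_\kappa^{(\beta)}(I_n)\big)$. Specializing the $y$-variables to $m$ copies of $1$ gives $\prod_i(1-x_i)^{-m\beta/2}$ on the left; setting $a=m\beta/2$ and invoking the principal specialization formula for $C_\kappa^{(\beta)}(I_m)$, the right side collapses to $\sum_\kappa (a)_\kappa^{(\beta)}C_\kappa^{(\beta)}(x)/|\kappa|!$, first for $a$ in the discrete set $\{m\beta/2:m\in\mathbb{Z}_{>0}\}$ and then for arbitrary $a$ by analytic continuation (both sides are entire in $a$). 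The main obstacle is tracking the Jack polynomial normalization constants carefully enough that the principal specialization reproduces $(a)_\kappa^{(\beta)}$ without extraneous factors of $\alpha=2/\beta$ or residual $C_\kappa^{(\beta)}(I_n)$ terms; a clean sanity check is the rank-one case $X={\rm diag}(x,0,\ldots,0)$, where only $\kappa=(k)$ contributes, $C_{(k)}^{(\beta)}(X)=x^k$, $(a)_{(k)}^{(\beta)}=(a)_k$, and the identity reduces verbatim to the scalar binomial theorem.
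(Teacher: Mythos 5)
The paper itself gives no proof of this proposition (it is quoted from Forrester \cite{Forrester2010}), so you are supplying an argument from scratch, and your overall plan---expand ${{}_1F_0}^{\!\!(\beta)}$ in the Jack basis, prove the generalized binomial series first for $a=m\beta/2$ by specializing a Cauchy-type kernel at $m$ copies of $1$, then extend in $a$---is a standard and workable route. However, the pivotal identity you invoke is misstated, and as written the chain of equalities fails. In the Koev--Edelman normalization the sum
$$\sum_{k\ge 0}\sum_{\kappa\vdash k}\frac{C_\kappa^{(\beta)}(x)\,C_\kappa^{(\beta)}(y)}{k!\,C_\kappa^{(\beta)}(I_n)}$$
is the two-matrix exponential kernel ${{}_0F_0}^{\!\!(\beta)}(X,Y)$, not the Cauchy kernel $\prod_{i,j}(1-x_iy_j)^{-\beta/2}$: already with one variable on each side it equals $\sum_k (xy)^k/k!=e^{xy}$, while the product is $(1-xy)^{-\beta/2}$. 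The error propagates: writing the principal specialization as $C_\kappa^{(\beta)}(I_m)=\gamma_\kappa\,(m\beta/2)_\kappa^{(\beta)}$ with $\gamma_\kappa$ independent of $m$, your specialization $y=1^m$ would produce $\sum_\kappa \frac{(m\beta/2)_\kappa^{(\beta)}}{(n\beta/2)_\kappa^{(\beta)}}\frac{C_\kappa^{(\beta)}(x)}{k!}$, a ${}_1F_1$-type series, not $\sum_\kappa (a)_\kappa^{(\beta)}C_\kappa^{(\beta)}(x)/k!$. Your rank-one sanity check does not detect this because it tests the final statement, not the kernel expansion you actually use.

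The repair is to use the correct Cauchy identity. In the $P$-normalization it reads $\prod_{i,j}(1-x_iy_j)^{-\beta/2}=\sum_\kappa \langle P_\kappa,P_\kappa\rangle^{-1}P_\kappa(x)P_\kappa(y)$, and converting to the $C$-normalization the coefficient of $C_\kappa^{(\beta)}(x)C_\kappa^{(\beta)}(y)$ is $1/(k!\,\gamma_\kappa)$, i.e.\ it equals $(m\beta/2)_\kappa^{(\beta)}\big/\bigl(k!\,C_\kappa^{(\beta)}(I_m)\bigr)$ for every positive integer $m$ (the ratio is $m$-independent), rather than your $1/(k!\,C_\kappa^{(\beta)}(I_n))$. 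With that coefficient, setting $y=1^m$ cancels $C_\kappa^{(\beta)}(I_m)$ and gives exactly $\prod_i(1-x_i)^{-m\beta/2}=\sum_\kappa (m\beta/2)_\kappa^{(\beta)}C_\kappa^{(\beta)}(x)/k!$ as you intend; the scalar case $\kappa=(k)$, where $\gamma_{(k)}=1/(\beta/2)_k$, recovers the ordinary binomial series and is a useful check of the normalization. For the extension in $a$, it is cleaner than appealing to entirety to note that for each fixed degree $k$ the coefficient of every monomial on both sides is a polynomial in $a$ agreeing at the infinitely many points $a=m\beta/2$, hence identically; absolute convergence for spectral radius of $X$ less than $1$ is as in the scalar case. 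With the corrected kernel expansion (and the attendant bookkeeping of $\gamma_\kappa$), your argument goes through.
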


Kaneko, Corollary 2 \cite{Kaneko1993}:

\begin{proposition} Let $\kappa = (\kappa_1,\ldots,\kappa_n)$ be nonincreasing and $X$ be diagonal. Let $a,b>-1$ and $\beta > 0$.
\begin{multline*}
\int_{0 < X < I}C_\kappa^{(\beta)}(X)\Delta(X)^\beta\prod_{i=1}^{n}\left[x_i^a(1-x_i)^b\right] dX \\
= C_\kappa^{(\beta)}(I)\cdot\prod_{i=1}^{n}\frac{\Gamma(i\beta/2+1)\Gamma(\kappa_i+a+(\beta/2)(n-i)+1)\Gamma(b + (\beta/2)(n-i)+1)}{\Gamma((\beta/2)+1)\Gamma(\kappa_i+a+b+(\beta/2)(2n-i-1)+2)}.
\end{multline*}
\end{proposition}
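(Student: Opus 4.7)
My plan is to derive this Selberg-type evaluation by combining the Euler-type integral representation of the multivariate Gauss hypergeometric function with the Jack-polynomial series expansion of ${{}_1F_0}^{\!\!(\beta)}$. The key ingredient I would want is
$$ {{}_2F_1}^{\!\!(\beta)}(a_1,a_2;c;X) = \frac{\Gamma_n^{(\beta)}(c)}{\Gamma_n^{(\beta)}(a_2)\Gamma_n^{(\beta)}(c-a_2)}\int_{0<Y<I}|Y|^{a_2-\tau}|I-Y|^{c-a_2-\tau}{{}_0F_0}^{\!\!(\beta)}(a_1;XY)\Delta(Y)^\beta\,dY, $$
with $\tau=(n-1)\beta/2+1$, where I am using the ${{}_1F_0}^{\!\!(\beta)}$ variant inside the integrand. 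Both sides are analytic in $X$, so I would expand them as Jack-polynomial series and match coefficients to read off the integral I want.

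Concretely, plugging in the series
$$ {{}_1F_0}^{\!\!(\beta)}(a_1;XY)=\sum_{k}\sum_{\kappa\vdash k}\frac{(a_1)_\kappa^{(\beta)}}{k!}\cdot\frac{C_\kappa^{(\beta)}(X)C_\kappa^{(\beta)}(Y)}{C_\kappa^{(\beta)}(I)} $$
into the integrand, interchanging sum and integral, and comparing with the series definition of ${{}_2F_1}^{\!\!(\beta)}$ given in Definition 2.6, linear independence of $(a_1)_\kappa^{(\beta)}C_\kappa^{(\beta)}(X)/k!$ in $a_1$ and $X$ lets me read off
$$ \int_{0<Y<I}|Y|^{a_2-\tau}|I-Y|^{c-a_2-\tau}C_\kappa^{(\beta)}(Y)\Delta(Y)^\beta\,dY = C_\kappa^{(\beta)}(I)\cdot\frac{\Gamma_n^{(\beta)}(a_2)\Gamma_n^{(\beta)}(c-a_2)}{\Gamma_n^{(\beta)}(c)}\cdot\frac{(a_2)_\kappa^{(\beta)}}{(c)_\kappa^{(\beta)}}. $$
Finally I would substitute $a=a_2-\tau$ and $b=c-a_2-\tau$ and expand the generalized Gammas via Definition 2.1 and the Pochhammers via Definition 2.5; the telescoping of $\Gamma$-quotients $(a_2)_\kappa^{(\beta)}\Gamma_n^{(\beta)}(a_2)/\Gamma_n^{(\beta)}(c)$ collapses to exactly the displayed product of $\Gamma(\kappa_i+a+(\beta/2)(n-i)+1)$ factors over $\Gamma(\kappa_i+a+b+(\beta/2)(2n-i-1)+2)$, and the $i$-independent factor $\Gamma(i\beta/2+1)/\Gamma(\beta/2+1)$ emerges from the $c_n^{(\beta)}$-style constants already recorded in Proposition 2.1.

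The main obstacle is justifying the Euler-type integral representation of ${{}_2F_1}^{\!\!(\beta)}$ for non-integer $\beta$; this is really the heart of Kaneko's contribution. I would establish it in one of two ways. The more conceptual route is to verify that both sides satisfy the same system of Calogero-Sutherland-type PDEs in the eigenvalues of $X$ (the operator that diagonalizes the Jack polynomials, a cousin of the $D_n^*$ of Definition 2.4) together with the matching value at $X=0$. The more elementary route is Aomoto-style induction on $|\kappa|$: the base case $\kappa=0$ is the classical Selberg integral, and for the inductive step one integrates by parts the identity $\sum_j\partial_{y_j}\bigl[y_j|Y|^a|I-Y|^b\Delta^\beta C_\kappa^{(\beta)}(Y)\bigr]=0$ and uses the Pieri rule for Jack polynomials to express $y_j\cdot C_\kappa^{(\beta)}$ as a combination of $C_{\kappa+e_i}^{(\beta)}$, producing a linear recursion that, together with the known evaluation of $C_\kappa^{(\beta)}(I)$, pins down the integral uniquely. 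Handling the Pieri coefficients cleanly enough to make this induction close is the genuinely delicate point.
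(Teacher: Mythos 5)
You should first note that the paper does not prove this proposition at all: it is quoted verbatim as Kaneko's Corollary 2, so your sketch has to stand on its own, and as it stands it has two concrete problems. The first is the normalization of the Euler-type representation you take as input. Setting $X=0$ (equivalently looking at the $\kappa=0$ coefficient) your displayed identity forces
\begin{equation*}
\int_{0<Y<I}|Y|^{a_2-\tau}|I-Y|^{c-a_2-\tau}\Delta(Y)^\beta\,dY \;=\; \frac{\Gamma_n^{(\beta)}(a_2)\Gamma_n^{(\beta)}(c-a_2)}{\Gamma_n^{(\beta)}(c)} \;=\; \pi^{\frac{n(n-1)\beta}{4}}\prod_{i=1}^{n}\frac{\Gamma(a+(\beta/2)(n-i)+1)\,\Gamma(b+(\beta/2)(n-i)+1)}{\Gamma(a+b+(\beta/2)(2n-i-1)+2)},
\end{equation*}
whereas the Selberg integral (and the $\kappa=0$ case of the Proposition) has $\prod_{i=1}^{n}\Gamma(i\beta/2+1)/\Gamma(\beta/2+1)$ in place of $\pi^{n(n-1)\beta/4}$; these differ for every $n\geq 2$ (for $\beta=2$, $n=2$: $2$ versus $\pi$). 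The ratio-of-$\Gamma_n^{(\beta)}$ constant is the one appropriate to the matrix-measure Euler integral at $\beta=1,2,4$; in eigenvalue coordinates with weight $\Delta(Y)^\beta$ the correct normalization is by the Selberg integral itself, and the factor you hope will ``emerge from the $c_n^{(\beta)}$-style constants'' is simply absent from the identity you wrote, so coefficient matching would deliver the Proposition with the wrong constant. A $\kappa=0$ sanity check against Selberg would have caught this.

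The second and more serious issue is circularity. Once you expand ${{}_1F_0}^{\!\!(\beta)}(a_1;;X,Y)$ in Jack polynomials and match coefficients of $(a_1)_\kappa^{(\beta)}C_\kappa^{(\beta)}(X)$, your Euler-type representation is term-by-term \emph{equivalent} to the family of evaluations being proved (the Kadell--Kaneko integral for every $\kappa$); it cannot serve as an input, it \emph{is} the statement in aggregated form. You acknowledge that justifying it for general $\beta$ is ``the heart of Kaneko's contribution,'' but neither of your two proposed routes is carried out: the holonomic-system route (which is indeed close to Kaneko's actual argument) requires exhibiting the system satisfied by the series, verifying the integral satisfies it, and proving uniqueness of symmetric analytic solutions with the given initial data; the Aomoto/Pieri route requires the explicit Pieri coefficients for Jack polynomials, a proof that the resulting recursion pins the integral down, and the Selberg base case --- precisely the points where the real work lies, and the normalization slip above shows the base case is not yet under control. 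As written, the proposal reduces the Proposition to an unproved assertion of comparable depth (with an incorrect constant), so it is a plan rather than a proof; nothing in the paper's own toolkit (Propositions 2.1--2.6) supplies the missing ingredient either, which is why the paper cites Kaneko rather than proving it.
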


From \cite[p. 595]{Forrester2010},
\begin{proposition} Let $X$ be diagonal,
\begin{eqnarray*}
{{}_2F_1}^{\!\!(\beta)}(a,b;c;X) &=& {{}_2F_1}^{\!\!(\beta)}(c-a,b;c;-X(I-X)^{-1})|I-X|^{-b} \\
&=& {{}_2F_1}^{\!\!(\beta)}(c-a,c-b;c;X)|I-X|^{c-a-b}. 
\end{eqnarray*}
\end{proposition}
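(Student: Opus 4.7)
The plan is to derive both identities from a standard Jack-polynomial integral representation of ${}_2F_1^{(\beta)}$, mimicking the classical scalar proofs of the Pfaff and Euler transformations of Gauss's hypergeometric function.

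First I would establish, by expanding ${}_1F_0^{(\beta)}(b;;XT)$ in its defining Jack series, applying Proposition 2.3 (Kaneko) to each term, and matching Pochhammer symbols via Definition 2.5, the integral representation
\[
{}_2F_1^{(\beta)}(a,b;c;X) = \frac{1}{\mathcal{N}(a,c-a)}\int_{0<T<I} |T|^{a-p}|I-T|^{c-a-p}\,{}_1F_0^{(\beta)}(b;;XT)\,\Delta(T)^\beta\,dT,
\]
where the integration is over diagonal $T$, $p = (n-1)\beta/2+1$, and $\mathcal{N}$ is the explicit constant produced by Proposition 2.3. The constant is symmetric under $a \leftrightarrow c-a$. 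For diagonal $X$, Proposition 2.2 further reduces ${}_1F_0^{(\beta)}(b;;XT) = |I-XT|^{-b}$.

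For the Pfaff identity, I would substitute $T \to I-S$: the region is preserved, $\Delta^\beta$ is unchanged, and the two Jacobi weights swap the exponents $a-p$ and $c-a-p$. Using that $X$ and $S$ are diagonal so all factors commute, one rewrites
\[
|I - X(I-S)|^{-b} = |I-X|^{-b}\,|I - (-X(I-X)^{-1})S|^{-b} = |I-X|^{-b}\,{}_1F_0^{(\beta)}(b;;-X(I-X)^{-1}S).
\]
Feeding this back into the integral representation and using the symmetry of $\mathcal{N}$ identifies the result with $|I-X|^{-b}\,{}_2F_1^{(\beta)}(c-a,b;c;-X(I-X)^{-1})$, establishing the first identity.

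For the Euler identity, I would apply Pfaff twice, interleaving the parameter symmetry ${}_2F_1^{(\beta)}(a,b;c;\cdot) = {}_2F_1^{(\beta)}(b,a;c;\cdot)$, which is visible from Definition 2.6. Setting $W = -X(I-X)^{-1}$ one checks the algebraic identities $I - W = (I-X)^{-1}$ and $-W(I-W)^{-1} = X$, so a second Pfaff step applied to ${}_2F_1^{(\beta)}(b,c-a;c;W)$ returns the argument to $X$, converts $c-a$ into $c-b$, and contributes the prefactor $|I-W|^{-(c-a)} = |I-X|^{c-a}$. Combined with the $|I-X|^{-b}$ from the first step, this yields the required $|I-X|^{c-a-b}$, and a final use of parameter symmetry relabels $(c-b,c-a)$ as $(c-a,c-b)$.

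The main obstacle is bookkeeping in the integral representation — verifying that the normalizing constant is correctly symmetric under $a \leftrightarrow c-a$, and ensuring all manipulations stay inside the convergence region (roughly $\Re(c) > \Re(a) > p - 1$). Once the identities hold on that strip, analytic continuation in $a,b,c$ extends them to their full domain of meromorphy.
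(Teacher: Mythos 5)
There is a genuine gap, and it sits at the very first step: the Euler-type integral representation you start from is false for general $\beta$ when $n\ge 2$. If you expand the kernel and apply Proposition 2.3 termwise, what the Kaneko integral actually reproduces is the \emph{two-argument} function ${}_1F_0^{(\beta)}(b;;X,T)=\sum_\kappa \frac{(b)_\kappa^{(\beta)}}{k!}\,C_\kappa^{(\beta)}(X)C_\kappa^{(\beta)}(T)/C_\kappa^{(\beta)}(I)$ as the coupling between $X$ and $T$; this is not the same object as the one-argument ${}_1F_0^{(\beta)}(b;;XT)=|I-XT|^{-b}$ of Proposition 2.2, and the two do not coincide for general $\beta$ (they do only in the rank-one case, or after a genuine group integration at $\beta=1,2,4$ over full matrices $T$, not over eigenvalues). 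Concretely, with the product kernel $\prod_i(1-x_it_i)^{-b}$ your representation forces, at second order in $X$, the Selberg-density moment identity $\langle t_1^2\rangle = a(a+1)/\bigl(c(c+1)\bigr)$; already for $n=2$, $\beta=2$, $a=3$, $c=6$ one computes $\langle t_1^2\rangle = 23/70 \neq 2/7$, so the representation fails (your first-order check passes by an Aomoto-type coincidence, which is why the error is easy to miss). Moreover, if you repair the representation by using the correct two-argument kernel, the subsequent Pfaff step collapses: the factorization $|I-X(I-S)|^{-b}=|I-X|^{-b}\,|I-(-X(I-X)^{-1})S|^{-b}$ has no analogue for ${}_1F_0^{(\beta)}(b;;X,I-S)$ — indeed such a "quasi-multiplicativity" is essentially the transformation you are trying to prove, so the argument becomes circular. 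The Euler/Pfaff chain at the end is fine as formal bookkeeping, but it rests entirely on the invalid representation.

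For comparison, the paper does not prove this statement at all: Proposition 2.4 is quoted from Forrester's \emph{Log-Gases and Random Matrices} (p.~595). The proofs available in the literature for general $\beta$ do not go through a naive Euler integral with a determinantal kernel; they use either the characterization of ${}_2F_1^{(\beta)}$ as the unique symmetric solution, analytic at the origin with value $1$, of a Muirhead-type system of partial differential equations (one then checks that both right-hand sides satisfy the same system), or the generalized binomial expansion for Jack polynomials. If you want to salvage an integral-representation proof, you must either restrict to parameter couplings where Kaneko's Selberg-type average genuinely evaluates to a single ${}_2F_1^{(\beta)}$ (exponents tied to $\beta$, not a free $b$), or work at $\beta=1,2,4$ with matrix-valued $T$ where $\det(I-XT)^{-b}$ is the correct kernel — neither of which yields the general-$\beta$ statement you need.
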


From \cite[p. 596]{Forrester2010},
\begin{proposition}
If $X$ is $n\times n$ diagonal and $a$ or $b$ is a nonpositive integer,
$$ {{}_2F_1}^{\!\!(\beta)}(a,b;c;X) ={{}_2F_1}^{\!\!(\beta)}(a,b;c;I){{}_2F_1}^{\!\!(\beta)}(a,b;a+b+1+(n-1)\beta/2 - c;I-X).   $$
\end{proposition}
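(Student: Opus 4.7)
The plan is to exploit the termination property: since $a$ (or $b$) is a nonpositive integer, $(a)_\kappa^{(\beta)}$ vanishes whenever $\kappa_1 > |a|$, so the series defining ${_2F_1}^{(\beta)}(a,b;c;X)$ collapses to a finite sum of Jack polynomials and is a symmetric polynomial in the eigenvalues of $X$ of degree at most $n|a|$. The same holds for ${_2F_1}^{(\beta)}(a,b;a+b+1+(n-1)\beta/2-c;I-X)$. Thus the identity to be proved is a polynomial identity, and I would prove it by a PDE/uniqueness argument.

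Specifically, ${_2F_1}^{(\beta)}(a,b;c;X)$ is characterized up to normalization as the unique symmetric polynomial solution, regular at $X=0$ with value $1$, of the multivariate hypergeometric PDE system of Muirhead/Heckman--Opdam/Beerends--Opdam; see Forrester \cite[Ch.~13]{Forrester2010}. The critical step is to verify that this PDE system is invariant under the joint involution $X\mapsto I-X$, $c\mapsto a+b+1+(n-1)\beta/2-c$, with $a,b$ fixed; this is the Jack-polynomial analogue of the classical $x\leftrightarrow 1-x$ symmetry of the Gauss hypergeometric operator. Given this, the map $X\mapsto {_2F_1}^{(\beta)}(a,b; a+b+1+(n-1)\beta/2-c; I-X)$ is a polynomial solution of the same PDE as ${_2F_1}^{(\beta)}(a,b;c;X)$. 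In the termination case the symmetric polynomial solutions of bounded partition support form a one-dimensional space, so ${_2F_1}^{(\beta)}(a,b;c;X)$ and ${_2F_1}^{(\beta)}(a,b;c;I) \cdot {_2F_1}^{(\beta)}(a,b; a+b+1+(n-1)\beta/2-c; I-X)$ are proportional, and equating them at $X=I$ forces the constant of proportionality to be $1$.

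The hard part will be establishing the $X\leftrightarrow I-X$ symmetry of the full multivariate PDE system together with the corresponding swap of $c$: for $n=1$ it is a one-line observation on the Gauss operator, but for general $n$ one has to track how the Calogero--Sutherland-type second-order part and the pairwise interaction terms transform, which is mechanical but is the technical heart of the argument. A more computational alternative is to expand both sides in the basis of Jack polynomials $C_\kappa^{(\beta)}(I-X)$ and match coefficients using Proposition 2.4 and a Jack-polynomial analogue of Gauss's $_2F_1(a,b;c;1)$ evaluation; this avoids PDEs but repackages essentially the same combinatorial content.
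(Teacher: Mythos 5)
The paper itself offers no proof of this proposition: it is imported verbatim from Forrester's \emph{Log-Gases and Random Matrices} (p.~596), so there is no internal argument to compare against. Your strategy --- terminate the series, view both sides as symmetric polynomials, characterize ${{}_2F_1}^{\!\!(\beta)}$ as the normalized symmetric solution of the Muirhead--Yan--Kaneko PDE system, check that the system is covariant under $X\mapsto I-X$, $c\mapsto a+b+1+(n-1)\beta/2-c$, and fix the constant by evaluation --- is the standard route to such connection formulas and can be made rigorous. The covariance check you defer does go through: writing the $i$-th operator with first-order coefficient $c-(n-1)\beta/2-(a+b+1-(n-1)\beta/2)x_i$ plus the interaction terms $\frac{\beta}{2}\sum_{j\neq i}\frac{x_i(1-x_i)}{x_i-x_j}\partial_i-\frac{\beta}{2}\sum_{j\neq i}\frac{x_j(1-x_j)}{x_i-x_j}\partial_j$, the substitution $x_i\to 1-x_i$ preserves $x_i(1-x_i)$ and the interaction terms and sends $c$ to exactly $a+b+1+(n-1)\beta/2-c$, so that step, while necessary to write out, is not where the risk lies.

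The genuine gap is your one-line claim that ``in the termination case the symmetric polynomial solutions of bounded partition support form a one-dimensional space.'' What the literature (Yan; Kaneko, the same paper you would cite for Proposition 2.3) actually provides is uniqueness of the symmetric solution \emph{analytic at the origin and equal to $1$ there}; to get one-dimensionality of polynomial solutions you must additionally rule out a polynomial solution vanishing at $X=0$. That is false to assert without parameter restrictions and true only generically: the PDE system forces a triangular recursion on the coefficients in the Jack expansion whose denominators are polynomials in $c$, so for generic $c$ a solution with vanishing constant term vanishes identically, and one then extends the identity to all admissible $c$ by noting both sides are finite sums with coefficients rational in $c$ (the same genericity-plus-continuity step also handles the requirements that $a+b+1+(n-1)\beta/2-c$ avoid the singular set of the Pochhammer denominators and that ${{}_2F_1}^{\!\!(\beta)}(a,b;c;I)\neq 0$ when you match at $X=I$, the latter being available in closed form from Proposition 2.6). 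With that uniqueness argument supplied, your proof is complete; your alternative suggestion --- expanding in the basis $C_\kappa^{(\beta)}(I-X)$ and matching coefficients using the Gauss evaluation --- is the way to avoid the PDE machinery altogether, at the cost of a nontrivial Jack-binomial computation.
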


From \cite[p. 594]{Forrester2010},
\begin{proposition}
$$ {{}_2F_1}^{\!\!(\beta)}(a,b;c;I) = \frac{\Gamma_n^{(\beta)}(c)\Gamma_n^{(\beta)}(c-a-b)}{\Gamma_n^{(\beta)}(c-a)\Gamma_n^{(\beta)}(c-b)}.  $$
\end{proposition}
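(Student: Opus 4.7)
The plan is to evaluate the integral
$$ \mathcal{I} := \int_{0<X<I} {{}_1F_0}^{\!\!(\beta)}(a;;X)\,|X|^{b-(n-1)\beta/2-1}|I-X|^{c-b-(n-1)\beta/2-1}\Delta(X)^\beta\,dX $$
in two different ways and equate the answers, bypassing any need for a separate matrix-argument Euler integral or Jack-polynomial averaging theorem.

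For the first evaluation, expand ${{}_1F_0}^{\!\!(\beta)}(a;;X)$ as a Jack series via Definition~2.6 and integrate termwise using Proposition~2.3 (Kaneko). For each $\kappa$, a careful inspection of Kaneko's product, with $a' \mapsto b-(n-1)\beta/2-1$ and $b' \mapsto c-b-(n-1)\beta/2-1$, shows that the $\kappa$-dependent shifted-gamma ratios collapse, via Definition~2.5, exactly to $(b)_\kappa^{(\beta)}/(c)_\kappa^{(\beta)}$, leaving a $\kappa$-independent Selberg constant
$$ K(b,c-b) := \prod_{i=1}^n \frac{\Gamma(i\beta/2+1)\Gamma(b-(i-1)\beta/2)\Gamma(c-b-(i-1)\beta/2)}{\Gamma(\beta/2+1)\Gamma(c-(i-1)\beta/2)} $$
outside the sum. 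The $C_\kappa^{(\beta)}(I)$ factors produced by Kaneko cancel against the $1/C_\kappa^{(\beta)}(I)$ factors in Definition~2.6, so termwise summation gives
$$ \mathcal{I} = K(b,c-b)\sum_{\kappa}\frac{(a)_\kappa^{(\beta)}(b)_\kappa^{(\beta)}}{(c)_\kappa^{(\beta)} k!} = K(b,c-b)\cdot{{}_2F_1}^{\!\!(\beta)}(a,b;c;I). $$

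For the second evaluation, Proposition~2.2 gives ${{}_1F_0}^{\!\!(\beta)}(a;;X) = |I-X|^{-a}$ (since $X < I$), so the integrand of $\mathcal{I}$ becomes the pure Selberg weight $|X|^{b-(n-1)\beta/2-1}|I-X|^{c-a-b-(n-1)\beta/2-1}\Delta(X)^\beta$, and Proposition~2.3 at $\kappa = 0$ evaluates this directly to $K(b,c-a-b)$. Equating the two evaluations yields
$$ {{}_2F_1}^{\!\!(\beta)}(a,b;c;I) = \frac{K(b,c-a-b)}{K(b,c-b)} = \prod_{i=1}^n \frac{\Gamma(c-a-b-(i-1)\beta/2)\Gamma(c-(i-1)\beta/2)}{\Gamma(c-a-(i-1)\beta/2)\Gamma(c-b-(i-1)\beta/2)}, $$
and Definition~2.1 repackages this product (the $\pi^{n(n-1)\beta/4}$ prefactors cancel in pairs) into $\Gamma_n^{(\beta)}(c)\Gamma_n^{(\beta)}(c-a-b)/[\Gamma_n^{(\beta)}(c-a)\Gamma_n^{(\beta)}(c-b)]$. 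Analytic continuation in $a,b,c$ extends the identity from the convergence range $\Re(b),\Re(c-b),\Re(c-a-b) > (n-1)\beta/2$ to generic values.

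The only delicate step is the Pochhammer collapse in the first evaluation. This is pure algebra against Definitions~2.1 and~2.5, but one has to track the $i$-dependent shifts in Kaneko's formula carefully and verify that swapping sum and integral is justified on the open set where all three Selberg exponents are strictly greater than $-1$. Once that bookkeeping is in place, the second evaluation and the final $\Gamma_n^{(\beta)}$ repackaging are essentially immediate.
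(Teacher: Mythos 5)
The paper never proves this proposition: it is quoted verbatim from Forrester's \emph{Log-Gases and Random Matrices} (p.\ 594), so there is no in-paper argument to compare against. Your double evaluation of the Euler--Selberg integral is a legitimate self-contained derivation, and it has the virtue of using only tools the paper already states (Proposition 2.2 for $|I-X|^{-a}$, Proposition 2.3 for the Jack--Selberg integral, Definitions 2.1 and 2.5). The exponent bookkeeping checks out: with $a'=b-(n-1)\beta/2-1$ and $b'=c-b-(n-1)\beta/2-1$, Kaneko's product collapses to $\prod_{i=1}^n\Gamma(\kappa_i+b-(i-1)\beta/2)\Gamma(c-b-(i-1)\beta/2)/\Gamma(\kappa_i+c-(i-1)\beta/2)$ times the $\kappa$-independent constant, i.e.\ $K(b,c-b)\,(b)_\kappa^{(\beta)}/(c)_\kappa^{(\beta)}$, while the $\kappa=0$ case of Kaneko gives $K(b,c-a-b)$; the ratio is exactly the stated $\Gamma_n^{(\beta)}$ expression. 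This is essentially the classical Selberg-integral route to the matrix-argument Gauss summation, so relative to the paper you are supplying a proof where the paper supplies a citation.

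Two details need repair, neither fatal. First, under the paper's Definition 2.6 one has ${}_1F_0^{(\beta)}(a;;X)=\sum_\kappa (a)_\kappa^{(\beta)}C_\kappa^{(\beta)}(X)/k!$, so after applying Kaneko each term retains one surviving factor of $C_\kappa^{(\beta)}(I)$; your displayed intermediate sum $\sum_\kappa (a)_\kappa^{(\beta)}(b)_\kappa^{(\beta)}/\bigl((c)_\kappa^{(\beta)}k!\bigr)$ omits it and, as written, is \emph{not} ${}_2F_1^{(\beta)}(a,b;c;I)$; the corrected sum $\sum_\kappa (a)_\kappa^{(\beta)}(b)_\kappa^{(\beta)}C_\kappa^{(\beta)}(I)/\bigl((c)_\kappa^{(\beta)}k!\bigr)$ is, so the conclusion is unaffected but the line should be fixed. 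Second, the sum--integral interchange is not justified merely by the Selberg exponents exceeding $-1$: for general $a$ the terms are not of one sign. The standard patch is to take $a$ real and large enough that every factor of $(a)_\kappa^{(\beta)}$ is nonnegative (e.g.\ $a>(n-1)\beta/2$), with $c$ correspondingly large so that $c-a-b>(n-1)\beta/2$, apply Tonelli, and then extend in $(a,b,c)$ by analytic continuation --- noting that the series at $X=I$ converges only for $\Re(c-a-b)>(n-1)\beta/2$, so outside that range the identity is one of analytic continuations. With those two repairs your argument is complete.
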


\section{Main Theorems}

\noindent\textit{Proof of Theorem 1.1.} Let $m, p \geq n$. We will draw $M$ by drawing $\Lambda \sim P(\Lambda) = {\rm BetaWishart}(m,n,\beta,\Omega^2)$, and compute $M$ by drawing $ M ~ P(M|\Lambda) =  {\rm BetaWishart}(p,n,\beta,\Lambda^{-1})^{-1}$. The distribution of $M$ is $\int P(M|\Lambda)P(\Lambda)d\lambda$. Then we will compute $C$ by $C = (M+I)^{-\frac{1}{2}}$. We use the convention that eigenvalues and generalized singular values are unordered. By the \cite{Dubbs2013} ${\rm BetaWishart}$ described in the introduction, we sample the diagonal $\Lambda$ from

$$ P(\Lambda) = \frac{\det(\Omega)^{-m\beta}}{n!\mathcal{K}_{m,n}^{(\beta)}}\prod_{i=1}^{n}\lambda_i^{\frac{m-n+1}{2}\beta-1}\prod_{i<j}|\lambda_i-\lambda_j|^{\beta}{{}_0F_0}^{(\beta)}\left(-\frac{1}{2}\Lambda,\Omega^{-2}\right)d\lambda, $$
$$ \mathcal{K}_{m,n}^{(\beta)} = \frac{2^{mn\beta/2}}{\pi^{n(n-1)\beta/2}}\cdot\frac{\Gamma_n^{(\beta)}(m\beta/2)\Gamma_n^{\beta}(n\beta/2)}{\Gamma(\beta/2)^n}. $$

Likewise, by inverting the answer to the \cite{Dubbs2013} ${\rm BetaWishart}$ described in the introduction, we can sample diagonal $M$ from

$$ P(M|\Lambda) = \frac{\det(\Lambda)^{p\beta/2}}{n!\mathcal{K}^{(\beta)}_{p,n}}\prod_{i=1}^{n}\mu_i^{-\frac{p-n+1}{2}\beta-1}\prod_{i<j}|\mu_i^{-1}-\mu_j^{-1}|^{\beta}{{}_0F_0}^{(\beta)}\left(-\frac{1}{2}M^{-1},\Lambda\right)d\mu. $$

To get $P(M)$ we need to compute
$$
\frac{\det(\Omega)^{-m\beta}}{n!^2\mathcal{K}_{m,n}^{(\beta)}\mathcal{K}_{p,n}^{(\beta)}}\prod_{i=1}^{n}\mu_i^{-\frac{p-n+1}{2}\beta-1}\prod_{i<j}|\mu_i^{-1}-\mu_j^{-1}|^{\beta}d\mu $$ $$ \times \int_{\lambda_1,\ldots,\lambda_n \geq 0} \det(\Lambda)^{p\beta/2}\prod_{i=1}^{n}\lambda_i^{\frac{m-n+1}{2}\beta-1}\prod_{i<j}|\lambda_i-\lambda_j|^\beta {{}_0F_0}^{(\beta)}\left(\frac{1}{2}\Omega^{-2},-\Lambda\right) {{}_0F_0}^{(\beta)}\left(-\frac{1}{2}M^{-1},\Lambda\right) d\lambda.
$$
Expanding the hypergeometric function, this is
$$
\frac{\det(\Omega)^{-m\beta}}{n!^2\mathcal{K}_{m,n}^{(\beta)}\mathcal{K}_{p,n}^{(\beta)}}\prod_{i=1}^{n}\mu_i^{-\frac{p-n+1}{2}\beta-1}\prod_{i<j}|\mu_i^{-1}-\mu_j^{-1}|^{\beta} \sum_{k=0}^{\infty}\sum_{\kappa\vdash k}\frac{C_\kappa^{(\beta)}\left(-\frac{1}{2}M^{-1}\right)}{k!C_\kappa^{(\beta)}(I)}d\mu$$ $$\times\left[\int_{\lambda_1,\ldots,\lambda_n \geq 0} \prod_{i=1}^{n}\lambda_i^{\frac{m-n+p+1}{2}\beta-1}\prod_{i<j}|\lambda_i-\lambda_j|^\beta {{}_0F_0}^{(\beta)}\left(\frac{1}{2}\Omega^{-2},-\Lambda\right) C_\kappa^{(\beta)}\left(\Lambda\right) d\lambda\right].
$$
Using Proposition 2.1,
$$
\frac{\det(\Omega)^{-m\beta}}{n!^2\mathcal{K}_{m,n}^{(\beta)}\mathcal{K}_{p,n}^{(\beta)}}\prod_{i=1}^{n}\mu_i^{-\frac{p-n+1}{2}\beta-1}\prod_{i<j}|\mu_i^{-1}-\mu_j^{-1}|^{\beta} \sum_{k=0}^{\infty}\sum_{\kappa\vdash k}\frac{C_\kappa^{(\beta)}\left(-\frac{1}{2}M^{-1}\right)}{k!C_\kappa^{(\beta)}(I)}d\mu$$ $$\times\left[\frac{n!\mathcal{K}_{m+p,n}^{(\beta)}}{2^{(m+p)n\beta/2}}\left(\frac{m+p}{2}\beta\right)_\kappa^{(\beta)}\det\left(\frac{1}{2}\Omega^{-2}\right)^{-\frac{m+p}{2}\beta}C_\kappa^{(\beta)}\left(2\Omega^2\right)\right].
$$
Cleaning things up,
\begin{equation*}
\frac{\det(\Omega)^{p\beta}\mathcal{K}_{m+p,n}^{(\beta)}}{n!\mathcal{K}_{m,n}^{(\beta)}\mathcal{K}_{p,n}^{(\beta)}}\prod_{i=1}^{n}\mu_i^{-\frac{p-n+1}{2}\beta-1}\prod_{i<j}|\mu_i^{-1}-\mu_j^{-1}|^{\beta} \sum_{k=0}^{\infty}\sum_{\kappa\vdash k}\frac{C_\kappa^{(\beta)}\left(-\frac{1}{2}M^{-1}\right)}{k!C_\kappa^{(\beta)}(I)}d\mu$$ $$\times\left[\left(\frac{m+p}{2}\cdot\beta\right)_\kappa^{(\beta)}C_\kappa^{(\beta)}\left(2\Omega^2\right)\right].
\end{equation*}
By the definition of the hypergeometric function, this is
\begin{equation}
\frac{\det(\Omega)^{p\beta}\mathcal{K}_{m+p,n}^{(\beta)}}{n!\mathcal{K}_{m,n}^{(\beta)}\mathcal{K}_{p,n}^{(\beta)}}\prod_{i=1}^{n}\mu_i^{-\frac{p-n+1}{2}\beta-1}\prod_{i<j}|\mu_i^{-1}-\mu_j^{-1}|^{\beta}\cdot{{}_1F_0}^{(\beta)}\left(\frac{m+p}{2}\beta;;-M^{-1},\Omega^2\right)d\mu.
\end{equation}
Converting to cosine form, $C = {\rm diag}(c_1,\ldots,c_n) = (M+I)^{-1/2}$, this is
\begin{multline}
\frac{2^n\mathcal{K}_{m+p,n}^{(\beta)}}{n!\mathcal{K}_{m,n}^{(\beta)}\mathcal{K}_{p,n}^{(\beta)}}\det(\Omega)^{p\beta}\prod_{i=1}^{n}c_i^{(p-n+1)\beta-1}\prod_{i=1}^{n}(1-c_i^2)^{-\frac{p+n-1}{2}\beta-1}\prod_{i<j}|c_i^2-c_j^2|^{\beta} \\ \times{{}_1F_0}^{(\beta)}\left(\frac{m+p}{2}\cdot\beta;;C^2(C^2-I)^{-1},\Omega^2\right)dc.
\end{multline}

\begin{theorem} If we set $\Omega = I$ and $u_i = c_i^2$, $(u_1,\ldots,u_n)$ obey the standard $\beta$-Jacobi density of \cite{Lippert2003}, \cite{Killip2004}, \cite{Forrester2005}, and \cite{Edelman2008}.
\begin{equation}
\frac{\mathcal{K}_{m+p,n}^{(\beta)}}{n!\mathcal{K}_{m,n}^{(\beta)}\mathcal{K}_{p,n}^{(\beta)}}\prod_{i=1}^{n}u_i^{\frac{p-n+1}{2}\beta-1}\prod_{i=1}^{n}(1-u_i)^{\frac{m-n+1}{2}\beta-1}\prod_{i<j}|u_i-u_j|^{\beta}du.
\end{equation}
\end{theorem}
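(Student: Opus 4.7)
The plan is to specialize the density in Theorem 1.1 (equation (2)) to $\Omega = I$, use Proposition 2.2 to collapse the ${}_1F_0^{(\beta)}$ factor, and then perform the substitution $u_i = c_i^2$, tracking the Jacobian.

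First, setting $\Omega = I$ kills the factor $\det(\Omega)^{p\beta}$, and the hypergeometric reduces to
\[
{}_1F_0^{(\beta)}\!\left(\tfrac{m+p}{2}\beta;\,; C^2(C^2-I)^{-1}, I\right) = {}_1F_0^{(\beta)}\!\left(\tfrac{m+p}{2}\beta;\,; C^2(C^2-I)^{-1}\right),
\]
by definition of the two-argument hypergeometric. Since each $c_i \in (0,1)$, the diagonal matrix $C^2(C^2-I)^{-1}$ has eigenvalues $c_i^2/(c_i^2-1) < 0 < 1$, so the hypothesis $X < I$ of Proposition 2.2 is satisfied. Noting the algebraic identity $I - C^2(C^2-I)^{-1} = -(C^2-I)^{-1} = (I-C^2)^{-1}$, Proposition 2.2 gives
\[
{}_1F_0^{(\beta)}\!\left(\tfrac{m+p}{2}\beta;\,; C^2(C^2-I)^{-1}\right) = \det\bigl((I-C^2)^{-1}\bigr)^{-\frac{m+p}{2}\beta} = \prod_{i=1}^n (1-c_i^2)^{\frac{m+p}{2}\beta}.
\]

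Second, I substitute this back and combine powers of $(1-c_i^2)$: the exponent becomes
\[
-\tfrac{p+n-1}{2}\beta - 1 + \tfrac{m+p}{2}\beta = \tfrac{m-n+1}{2}\beta - 1,
\]
producing the factor $\prod_i (1-c_i^2)^{\frac{m-n+1}{2}\beta - 1}$, which already has the $\beta$-Jacobi shape in the $c_i^2$ variables.

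Third, I change variables to $u_i = c_i^2$, so $dc_i = (2\sqrt{u_i})^{-1}\,du_i$ and $\prod_i dc_i = 2^{-n}\prod_i u_i^{-1/2}\,du$. Folding $c_i^{(p-n+1)\beta-1}$ into $u_i^{((p-n+1)\beta-1)/2}$ and multiplying by $u_i^{-1/2}$ yields $u_i^{\frac{p-n+1}{2}\beta - 1}$; the product $\prod_{i<j}|c_i^2-c_j^2|^\beta$ becomes $\prod_{i<j}|u_i-u_j|^\beta$ verbatim; and the $2^n$ in the Theorem 1.1 prefactor cancels the $2^{-n}$ from the Jacobian. Assembling everything yields the claimed density (3).

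The only delicate step is the algebraic simplification $I - C^2(C^2-I)^{-1} = (I-C^2)^{-1}$ and verifying that the resulting exponent bookkeeping on $(1-c_i^2)$ lines up exactly with the $\frac{m-n+1}{2}\beta - 1$ exponent of the standard $\beta$-Jacobi density; the rest is mechanical substitution of the Jacobian and cancellation of the $2^n$ factor. \hfill$\square$
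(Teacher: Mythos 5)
Your proposal is correct and follows essentially the same route as the paper: apply Proposition 2.2 to collapse the ${}_1F_0^{(\beta)}$ factor (using $I - C^2(C^2-I)^{-1} = (I-C^2)^{-1}$, so the exponent of $(1-c_i^2)$ becomes $\frac{m-n+1}{2}\beta-1$), then substitute $u_i = c_i^2$ with the $2^{-n}$ Jacobian cancelling the $2^n$ prefactor. Your write-up merely spells out the matrix identity and the Jacobian bookkeeping that the paper leaves implicit.
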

\begin{proof}
Proposition 2.2 works from the statement of Theorem 1.1 because $C^2(C^2-I)^{-1} < I$ (we know that $M>0$ from how it is sampled, so $0 < C^2 = (M+I)^{-1} < I$, likewise $C^2-I$).
\begin{multline*}
\frac{2^n\mathcal{K}_{m+p,n}^{(\beta)}}{n!\mathcal{K}_{m,n}^{(\beta)}\mathcal{K}_{p,n}^{(\beta)}}\prod_{i=1}^{n}c_i^{(p-n+1)\beta-1}\prod_{i=1}^{n}(1-c_i^2)^{-\frac{p+n-1}{2}\beta-1}\prod_{i<j}|c_i^2-c_j^2|^{\beta} \\ \times\det(I-C^2(C^2-I)^{-1})^{-\frac{m+p}{2}\beta}dc,
\end{multline*}
or equivalently
$$
\frac{2^n\mathcal{K}_{m+p,n}^{(\beta)}}{n!\mathcal{K}_{m,n}^{(\beta)}\mathcal{K}_{p,n}^{(\beta)}}\prod_{i=1}^{n}c_i^{(p-n+1)\beta-1}\prod_{i=1}^{n}(1-c_i^2)^{\frac{m-n+1}{2}\beta-1}\prod_{i<j}|c_i^2-c_j^2|^{\beta}dc.
$$
If we substitute $u_i = c_i^2$, by the change-of-variables theorem we get the desired result.
\end{proof}

\noindent\textit{Proof of Theorem 1.2.} Let $H = {\rm diag}(\eta_1,\ldots,\eta_n) = M^{-1}$. Changing variables from (3.1) we get
$$ \frac{\det(\Omega)^{p\beta}\mathcal{K}_{m+p,n}^{(\beta)}}{n!\mathcal{K}_{m,n}^{(\beta)}\mathcal{K}_{p,n}^{(\beta)}}\prod_{i=1}^{n}\eta_i^{\frac{p-n+1}{2}\beta-1}\prod_{i<j}|\eta_i-\eta_j|^{\beta}\cdot{{}_1F_0}^{(\beta)}((m+p)\beta/2;;H,-\Omega^2)d\eta. $$
Taking the maximum eigenvalue, following mvs.pdf,
\begin{multline*} P(H < xI) = \frac{\det(\Omega)^{p\beta}\mathcal{K}_{m+p,n}^{(\beta)}}{n!\mathcal{K}_{m,n}^{(\beta)}\mathcal{K}_{p,n}^{(\beta)}} \\ \times\int_{H<xI}\prod_{i=1}^{n}\eta_i^{\frac{p-n+1}{2}\beta-1}\prod_{i<j}|\eta_i-\eta_j|^{\beta}\cdot{{}_1F_0}^{(\beta)}((m+p)\beta/2;;H,-\Omega^2)d\eta, \end{multline*}
Letting $N = {\rm diag}(\nu_1,\ldots,\nu_n) = H/x$, changing variables again we get
\begin{multline*} P(H < xI) = \frac{\det(\Omega)^{p\beta}\mathcal{K}_{m+p,n}^{(\beta)}}{n!\mathcal{K}_{m,n}^{(\beta)}\mathcal{K}_{p,n}^{(\beta)}}\cdot x^{\frac{np\beta}{2}} \\ \times\int_{N<I}\prod_{i=1}^{n}\nu_i^{\frac{p-n+1}{2}\beta-1}\prod_{i<j}|\nu_i-\nu_j|^{\beta}\cdot{{}_1F_0}^{(\beta)}((m+p)\beta/2;;N,-x\Omega^2)d\nu, \end{multline*}
Expanding the hypergeometric function we get
\begin{multline}
P(H < xI) = \frac{\det(\Omega)^{p\beta}\mathcal{K}_{m+p,n}^{(\beta)}}{n!\mathcal{K}_{m,n}^{(\beta)}\mathcal{K}_{p,n}^{(\beta)}}\cdot x^{\frac{np\beta}{2}}\cdot\sum_{k=0}^{\infty}\sum_{\kappa\vdash k}\frac{((m+p)\beta/2)_\kappa^{(\beta)}C_\kappa^{(\beta)}(-x\Omega^2)}{k!C_\kappa^{(\beta)}(I)} \\ \times\left[\int_{N<I}\prod_{i=1}^{n}\nu_i^{\frac{p-n+1}{2}\beta-1}\prod_{i<j}|\nu_i-\nu_j|^{\beta}\cdot C_\kappa^{(\beta)}(N)d\nu\right].
\end{multline}
Using Proposition 2.3,
\begin{multline*}
\int_{N<I}\prod_{i=1}^{n}\nu_i^{\frac{p-n+1}{2}\beta-1}\prod_{i<j}|\nu_i-\nu_j|^{\beta}\cdot C_\kappa^{(\beta)}(N)d\nu \\ =
\frac{C_\kappa^{(\beta)}(I)}{\Gamma(\beta/2+1)^n}\cdot\prod_{i=1}^{n}\frac{\Gamma(i\beta/2+1)\Gamma(\kappa_i+(\beta/2)(p+1-i))\Gamma((\beta/2)(n-i)+1)}{\Gamma(\kappa_i+(\beta/2)(p+n-i)+1)}
\\
= \frac{C_\kappa^{(\beta)}(I)\Gamma_n^{(\beta)} ((n\beta/2)+1)\Gamma_n^{(\beta)}(((n-1)\beta/2)+1)}{\pi^{\frac{n(n-1)\beta}{2}}\Gamma(\beta/2+1)^n}\cdot\prod_{i=1}^{n}\frac{\Gamma(\kappa_i+(\beta/2)(p+1-i))}{\Gamma(\kappa_i+(\beta/2)(p+n-i)+1)}
\end{multline*}
Now
\begin{eqnarray*}
\prod_{i=1}^{n}\Gamma(\kappa_i+(\beta/2)(p+1-i)) &=& \prod_{i=1}^{n}\Gamma((\beta/2)(p+1-i))\prod_{j=1}^{\kappa_i}((\beta/2)(p+1-i)+j-1) \\
&=& \pi^{-\frac{n(n-1)}{4}\beta}\Gamma_n^{(\beta)}(p\beta/2)\prod_{j=1}^{\kappa_i}((\beta/2)(p+1-i)+j-1)\\
&=& \pi^{-\frac{n(n-1)}{4}\beta}\Gamma_n^{(\beta)}(p\beta/2)(p\beta/2)_\kappa^{(\beta)}\\
\prod_{i=1}^{n}\Gamma(\kappa_i+(\beta/2)(p+n-i)+1) &=& \prod_{i=1}^{n}\Gamma((\beta/2)(p+n-i)+1)\prod_{j=1}^{\kappa_i}((\beta/2)(p+n-i)+j) \\
&=& \pi^{-\frac{n(n-1)}{4}\beta}\Gamma_n^{(\beta)}((p+n-1)\beta/2+1)\prod_{j=1}^{\kappa_i}((\beta/2)(p+n-i)+j) \\
&=& \pi^{-\frac{n(n-1)}{4}\beta}\Gamma_n^{(\beta)}((p+n-1)\beta/2+1)((p+n-1)\beta/2+1)_\kappa^{(\beta)}.
\end{eqnarray*}
Therefore,
\begin{multline*}
\int_{N<I}\prod_{i=1}^{n}\nu_i^{\frac{p-n+1}{2}\beta-1}\prod_{i<j}|\nu_i-\nu_j|^{\beta}\cdot C_\kappa^{(\beta)}(N)d\nu \\
= \frac{C_\kappa^{(\beta)}(I)\Gamma_n^{(\beta)} ((n\beta/2)+1)\Gamma_n^{(\beta)}(((n-1)\beta/2)+1)\Gamma_n^{(\beta)}(p\beta/2)}{\pi^{\frac{n(n-1)\beta}{2}}\Gamma(\beta/2+1)^n\Gamma_n^{(\beta)}((p+n-1)\beta/2+1)}\cdot\frac{(p\beta/2)_\kappa^{(\beta)}}{((p+n-1)\beta/2+1)_\kappa^{(\beta)}}
\end{multline*}
Using (3.4) and the definition of the hypergeometric function we get
\begin{multline*}
P(H < xI) = \frac{\det(\Omega)^{p\beta}\mathcal{K}_{m+p,n}^{(\beta)}}{n!\mathcal{K}_{m,n}^{(\beta)}\mathcal{K}_{p,n}^{(\beta)}}\cdot \frac{\Gamma_n^{(\beta)} ((n\beta/2)+1)\Gamma_n^{(\beta)}(((n-1)\beta/2)+1)\Gamma_n^{(\beta)}(p\beta/2)}{\pi^{\frac{n(n-1)\beta}{2}}\Gamma(\beta/2+1)^n\Gamma_n^{(\beta)}((p+n-1)\beta/2+1)} \\ \times x^{\frac{np\beta}{2}}\cdot {{}_2F_1}^{(\beta)}\left( \frac{m+p}{2}\beta,\frac{p}{2}\beta;\frac{p+n-1}{2}\beta+1;-x\Omega^2\right).
\end{multline*}
Rewriting the constant we get
$$
\frac{\Gamma(\beta/2)^n\Gamma_n^{(\beta)}((m+p)\beta/2)}{n!\Gamma_n^{(\beta)}(m\beta/2)\Gamma_n^{(\beta)}(n\beta/2)}\cdot \frac{\Gamma_n^{(\beta)} ((n\beta/2)+1)\Gamma_n^{(\beta)}(((n-1)\beta/2)+1)}{\Gamma(\beta/2+1)^n\Gamma_n^{(\beta)}((p+n-1)\beta/2+1)}.
$$
Commuting some terms gives
$$
\left(\frac{\Gamma(\beta/2)^n\Gamma_n^{(\beta)} ((n\beta/2)+1)}{n!\Gamma(\beta/2+1)^n\Gamma_n^{(\beta)}(n\beta/2)}\right)\cdot \frac{\Gamma_n^{(\beta)}((m+p)\beta/2)\Gamma_n^{(\beta)}(((n-1)\beta/2)+1)}{\Gamma_n^{(\beta)}(m\beta/2)\Gamma_n^{(\beta)}((p+n-1)\beta/2+1)}.
$$
The left fraction in parentheses is
$$ \frac{1}{\prod_{i=1}^{n}(i\beta/2)}\cdot\frac{\prod_{i=1}^{n}\Gamma((n\beta/2)+1 - (i-1)\beta/2)}{\prod_{i=1}^{n}\Gamma((n\beta/2) - (i-1)\beta/2)} = \frac{1}{\prod_{i=1}^{n}(i\beta/2)}\cdot\frac{\prod_{i=1}^{n}\Gamma((i\beta/2)+1)}{\prod_{i=1}^{n}\Gamma(i\beta/2)} = 1.$$
Hence
\begin{multline}
P(H < xI) = \det(\Omega)^{p\beta}\cdot\frac{\Gamma_n^{(\beta)}((m+p)\beta/2)\Gamma_n^{(\beta)}(((n-1)\beta/2)+1)}{\Gamma_n^{(\beta)}(m\beta/2)\Gamma_n^{(\beta)}((n+p-1)\beta/2+1)} \\ \times x^{\frac{np\beta}{2}}\cdot {{}_2F_1}^{(\beta)}\left( \frac{m+p}{2}\beta,\frac{p}{2}\beta;\frac{p+n-1}{2}\beta+1;-x\Omega^2\right).
\end{multline}
Now $H = M^{-1}$ and $C = (M+I)^{-1/2}$, so equivalently,
\begin{multline}
P(C < xI) = \det(\Omega)^{p\beta}\cdot\frac{\Gamma_n^{(\beta)}((m+p)\beta/2)\Gamma_n^{(\beta)}(((n-1)\beta/2)+1)}{\Gamma_n^{(\beta)}(m\beta/2)\Gamma_n^{(\beta)}((n+p-1)\beta/2+1)} \\ \times \left(\frac{x^2}{1-x^2}\right)^{\frac{np\beta}{2}}\cdot {{}_2F_1}^{(\beta)}\left( \frac{m+p}{2}\beta,\frac{p}{2}\beta;\frac{p+n-1}{2}\beta+1;-\frac{x^2}{1-x^2}\cdot\Omega^2\right).
\end{multline}
\noindent\textbf{Remark.} Using $U = {\rm diag}(u_1,\ldots,u_n) = C^2$ and setting $\Omega = I$ this is 
\begin{multline*}
P(U < xI) = \frac{\Gamma_n^{(\beta)}((m+p)\beta/2)\Gamma_n^{(\beta)}(((n-1)\beta/2)+1)}{\Gamma_n^{(\beta)}(m\beta/2)\Gamma_n^{(\beta)}((n+p-1)\beta/2+1)} \\ \times \left(\frac{x}{1-x}\right)^{\frac{np\beta}{2}}\cdot {{}_2F_1}^{(\beta)}\left( \frac{m+p}{2}\beta,\frac{p}{2}\beta;\frac{p+n-1}{2}\beta+1;-\frac{x}{1-x}\cdot I\right),
\end{multline*}
so by using Proposition 2.4, this is
\begin{multline*}
P(U < xI) = \frac{\Gamma_n^{(\beta)}((m+p)\beta/2)\Gamma_n^{(\beta)}(((n-1)\beta/2)+1)}{\Gamma_n^{(\beta)}(m\beta/2)\Gamma_n^{(\beta)}((n+p-1)\beta/2+1)} \\ \times x^{\frac{np\beta}{2}}\cdot {{}_2F_1}^{(\beta)}\left(\frac{n-m-1}{2}\beta+1,\frac{p}{2}\beta;\frac{p+n-1}{2}\beta+1;x I\right),
\end{multline*}
which is familiar from Dumitriu and Koev \cite{Dumitriu2008}.

Now back to the proof of Theorem 1.2. If we use Proposition 2.4 on (3.6) we get
\begin{multline}
P(C < xI) = \det(x^2\Omega^2((1-x^2)I+x^2\Omega^2)^{-1})^{\frac{p\beta}{2}}\cdot\frac{\Gamma_n^{(\beta)}((m+p)\beta/2)\Gamma_n^{(\beta)}(((n-1)\beta/2)+1)}{\Gamma_n^{(\beta)}(m\beta/2)\Gamma_n^{(\beta)}((n+p-1)\beta/2+1)} \\ \times {{}_2F_1}^{(\beta)}\left( \frac{n-m-1}{2}\beta+1,\frac{p}{2}\beta;\frac{p+n-1}{2}\beta+1;x^2\Omega^2((1-x^2)I+x^2\Omega^2)^{-1}\right).
\end{multline}
Using the approach of Dumitriu and Koev \cite{Dumitriu2008}, let $t = (m-n+1)\beta/2-1$ in $\mathbb{Z}_{\geq 0}$. We can prove that the series truncates: Looking at (3.7), the hypergeometric function involves the term
$$ (-t)_\kappa^{(\beta)} = \prod_{i=1}^{n}\prod_{j=1}^{\kappa_i}\left(-t - \frac{i-1}{2}\beta + j-1\right), $$
which is zero when $i = 1$ and $j-1 = t$, so the series truncates when any $\kappa_i$ has $\kappa_i-1\geq t$, or just $\kappa_1 = t+1$. This must happen if $k > nt$. Thus (3.7) is just a finite polynomial,
\begin{multline}
P(C < xI) = \det(x^2\Omega^2((1-x^2)I+x^2\Omega^2)^{-1})^{\frac{p\beta}{2}}\cdot\frac{\Gamma_n^{(\beta)}((m+p)\beta/2)\Gamma_n^{(\beta)}(((n-1)\beta/2)+1)}{\Gamma_n^{(\beta)}(m\beta/2)\Gamma_n^{(\beta)}((n+p-1)\beta/2+1)} \\ \times\sum_{k=1}^{nt}\sum_{\kappa\vdash k, \kappa_1\leq t} \frac{((n-m-1)\beta/2+1)_\kappa^{(\beta)}(p\beta/2)_\kappa^{(\beta)}}{((p+n-1)\beta/2+1)_\kappa^{(\beta)}}\cdot C_\kappa^{(\beta)}(x^2\Omega^2((1-x^2)I+x^2\Omega^2)^{-1}).
\end{multline}
Let $Z$ be a positive-definite diagonal matrix, and $\epsilon$ a real with $|\epsilon| > 0$. Define
$$ f(Z,\epsilon) = \sum_{k=1}^{nt}\sum_{\kappa\vdash k, \kappa_1\leq t} \frac{((n-m-1)\beta/2+1)_\kappa^{(\beta)}(p\beta/2)_\kappa^{(\beta)}}{((p+n-1)\beta/2+1+\epsilon)_\kappa^{(\beta)}}\cdot C_\kappa^{(\beta)}(Z). $$
Using Proposition 2.5,
\begin{multline} f(Z,\epsilon) = {{}_2F_1}^{(\beta)}\left(\frac{n-m-1}{2}\beta+1,\frac{p}{2}\beta;\frac{p+n-1}{2}\beta+1+\epsilon;I\right) \\ \times {{}_2F_1}^{(\beta)}\left(\frac{n-m-1}{2}\beta+1,\frac{p}{2}\beta;\frac{n-m-1}{2}\beta+1-\epsilon;I-Z\right)
\end{multline}
Using the definition of the hypergeometric function and the fact that the series must truncate,
\begin{multline} f(Z,\epsilon) = {{}_2F_1}^{(\beta)}\left(\frac{n-m-1}{2}\beta+1,\frac{p}{2}\beta;\frac{p+n-1}{2}\beta+1+\epsilon;I\right) \\ \times \sum_{k=1}^{nt}\sum_{\kappa\vdash k,\kappa_1\leq t}\frac{ ((n-m-1)\beta/2+1)_\kappa^{(\beta)}(p\beta/2)_\kappa^{(\beta)}}{ ((n-m-1)\beta/2+1-\epsilon)_\kappa^{(\beta)}     }C_\kappa^{(\beta)}(I-Z)
\end{multline}
Now the limit is obvious
\begin{multline} f(Z,0) = {{}_2F_1}^{(\beta)}\left(\frac{n-m-1}{2}\beta+1,\frac{p}{2}\beta;\frac{p+n-1}{2}\beta+1;I\right) \\ \times \sum_{k=1}^{nt}\sum_{\kappa\vdash k,\kappa_1\leq t}(p\beta/2)_\kappa^{(\beta)}C_\kappa^{(\beta)}(I-Z).
\end{multline}
Plugging this expression into (3.8)
\begin{multline*}
P(C < xI) = \det(x^2\Omega^2((1-x^2)I+x^2\Omega^2)^{-1})^{\frac{p\beta}{2}}\cdot\frac{\Gamma_n^{(\beta)}((m+p)\beta/2)\Gamma_n^{(\beta)}(((n-1)\beta/2)+1)}{\Gamma_n^{(\beta)}(m\beta/2)\Gamma_n^{(\beta)}((n+p-1)\beta/2+1)} \\ \times {{}_2F_1}^{(\beta)}\left(\frac{n-m-1}{2}\beta+1,\frac{p}{2}\beta;\frac{p+n-1}{2}\beta+1;I\right) \\ \times \sum_{k=1}^{nt}\sum_{\kappa\vdash k,\kappa_1\leq t}(p\beta/2)_\kappa^{(\beta)}C_\kappa^{(\beta)}((1-x^2)((1-x^2)I+x^2\Omega^2)^{-1}).
\end{multline*}
Cancelling via Proposition 2.6 gives
\begin{multline}
P(C < xI) = \det(x^2\Omega^2((1-x^2)I+x^2\Omega^2)^{-1})^{\frac{p\beta}{2}}\\ \times \sum_{k=0}^{nt}\sum_{\kappa\vdash k,\kappa_1\leq t}\frac{1}{k!}(p\beta/2)_\kappa^{(\beta)}C_\kappa^{(\beta)}\left((1-x^2)((1-x^2)I+x^2\Omega^2)^{-1}\right).
\end{multline}

\section{Numerical Evidence}

The plots below are empirical cdf's of the greatest generalized singular value as sampled by the ${\rm BetaMANOVA}$ pseudocode in the introduction (the blue lines) against the Theorem 1.2 formula for them as calculated by {\tt mhg} (red x's).

\begin{figure}
\centering
\includegraphics[scale=.54]{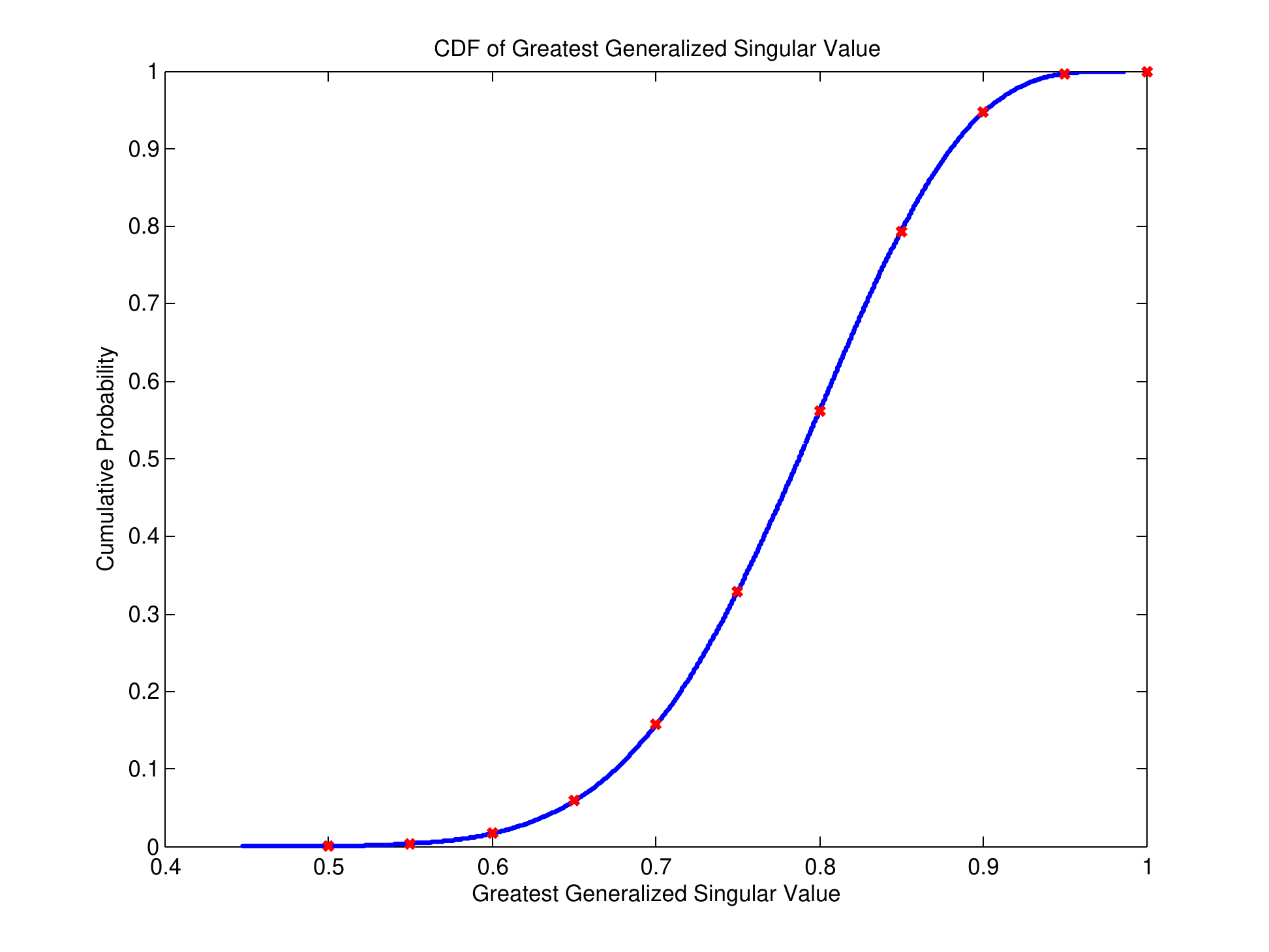}
\caption{Empirical vs. analytic when $m = 7$, $n = 4$, $p = 5$, $\beta = 2.5$, and $\Omega = {\rm diag}(1,2,2.5,2.7)$.}
\end{figure}

\begin{figure}
\centering
\includegraphics[scale=.54]{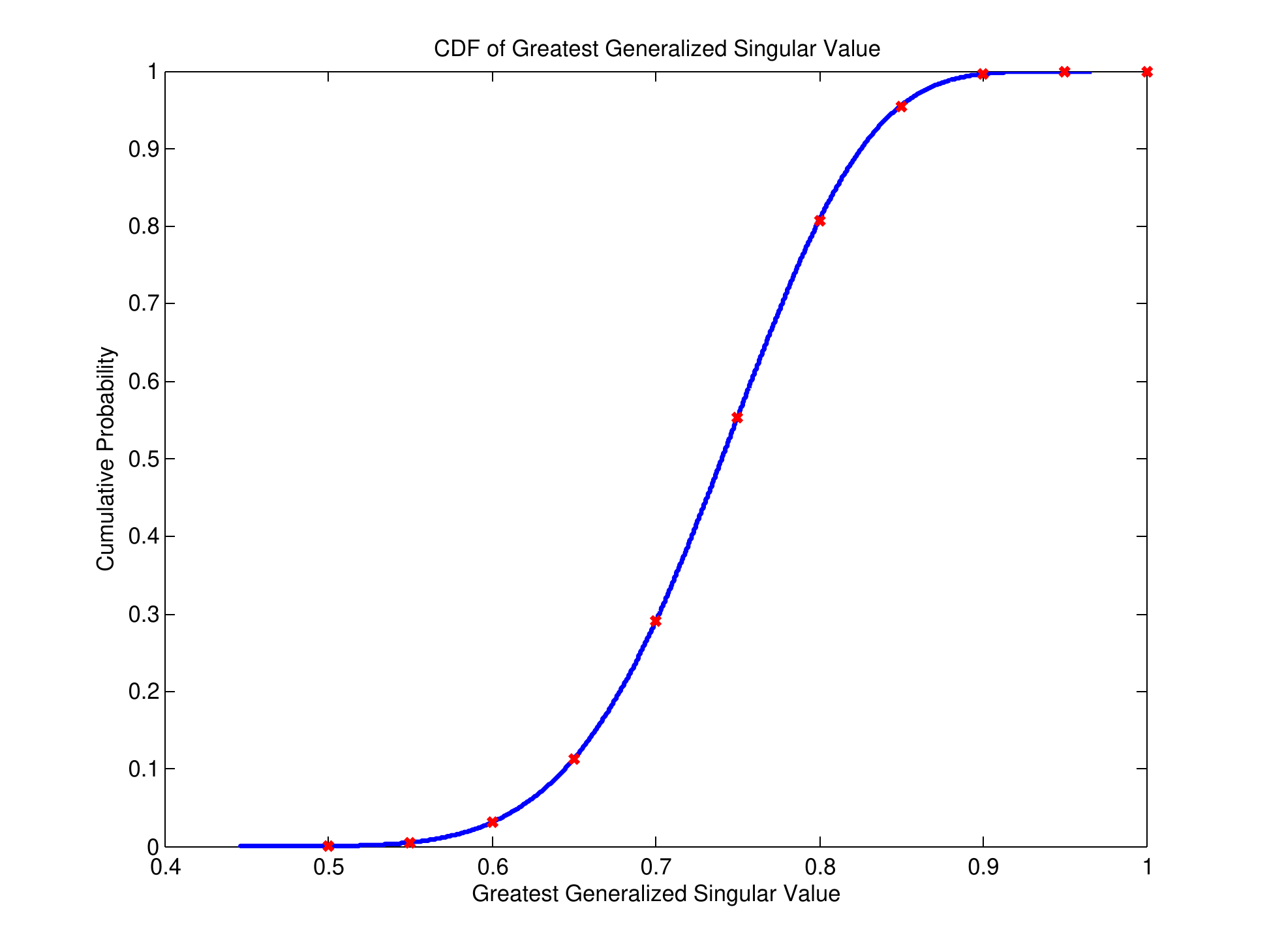}
\caption{Empirical vs. analytic when $m = 9$, $n = 4$, $p = 6$, $\beta = 3$, and $\Omega = {\rm diag}(1,2,2.5,2.7)$.}
\end{figure}

\section*{Acknowledgments}

We acknowledge the support of the National Science Foundation through grants SOLAR Grant No. 1035400, DMS-1035400, and DMS-1016086. Alexander Dubbs was funded by the NSF GRFP.


\begin{thebibliography}{0}

\bibitem{Dumitriu2002} Ioana Dumitriu and Alan Edelman, ``Matrix models for beta ensembles,'' {\it Journal of Mathematical Physics}, Volume 43, Number 11, November, 2002.

\bibitem{Lippert2003} Ross A. Lippert, ``A matrix model for the $\beta$-Jacobi ensemble,'' {\it Journal of Mathematical Physics}, Volume 44, Number 10, October, 2003.

\bibitem{Killip2004} Rowan Killip and Irina Nenciu, ``Matrix Models for Circular Ensembles,'' {\it International Mathematics Research Noticies}, No. 50, 2004. 

\bibitem{Forrester2005} Peter J. Forrester, Eric M. Rains, ``Interpretations of some parameter dependent generalizations of classical matrix ensembles,'' {\it Probability Theory and Related Fields}, Volume 131, Issue 1, pp. 1-61, January, 2005.

\bibitem{Edelman2008} Alan Edelman and Brian D. Sutton, ``The Beta-Jacobi Matrix Model, the CS Decomposition, and Generalized Singular Value Problems,'' {\it Foundations of Computational Mathematics}, 259-285 (2008).

\bibitem{Dumitriu2008} Ioana Dumitriu and Plamen Koev, ``Distributions of the extreme eigenvalues of Beta-Jacobi random matrices,'' {\it SIAM Journal of Matrix Analysis and Applications}, Volume 30, Number 1, 2008.

\bibitem{Forrester2011} Peter J. Forrester, ``Probability densities and distributions for spiked Wishart $\beta$-ensembles,'' 2011, http://arxiv.org/abs/1101.2261

\bibitem{Dubbs2013} Alexander Dubbs, Alan Edelman, Plamen Koev, and Praveen Venkataramana, ``The Beta-Wishart Ensemble,'' 2013, http://arxiv.org/abs/1305.3561

\bibitem{Fisher1939} R. A. Fisher, ``The sampling distribution of some statistics obtained from non-linear equations,'' {\it Ann. Eugenics} 9, 238-249, 1939.

\bibitem{Hsu1939} P. L. Hsu, ``On the distribution of roots of certain determinantal equations,'' {\it Ann. Eugenics} 9, 250-258, 1939.

\bibitem{Roy1939} S. N. Roy, ``p-Statistics and some generalizations in analysis of variance appropriate to multivariate problems,'' {\it Sankhya} 4, 381-396, 1939.

\bibitem{Constantine1963} A. G. Constantine, ``Some non-central distribution problems in multivariate analysis,'' {\it Ann. Math. Statist.} Volume 34, Number 4, (1963).

\bibitem{James1964} Alan T. James, ``Distributions of matrix variates and latent roots derived from normal samples,'' {\it Ann. Math. Statist.} Volume 35, Number 2, (1964).

\bibitem{Koev2006} Plamen Koev and Alan Edelman, ``The Efficient Evaluations of the Hypergeometric Function of a Matrix Argument,'' {\it Mathematics of Computation}, Volume 75, Number 254, Pages 833-846, January 19, 2006. Code available at http://www-math.mit.edu/~plamen/software/mhgref.html

\bibitem{Edelman2010} Alan Edelman, ``The random matrix technique of ghosts and shadows,'' {\it Markov Processes and Related Fields}, 2010.

\bibitem{Macdonald} I. G. Macdonald, ``Hypergeometric functions,'' unpublished manuscript.

\bibitem{Baker1997} T. H. Baker, P. J. Forrester, ``The Calogero-Sutherland Model and Generalized Classical Polynomials,'' {\it Communications in Mathematical Physics}, Volume 188, Issue 1, September, 1997.

\bibitem{Kaneko1993} Jyoichi Kaneko, ``Selberg integrals and hypergeometric functions associated with Jack polynomials,'' {\it SIAM Journal on Mathematical Analysis}, Volume 24, Issue 4, July 1993.

\bibitem{Forrester2010} P. J. Forrester, {\it Log-Gases and Random Matrices}, London Mathematical Society Monographps Series, vol. 34, Princeton University Press, Princeton, NJ, 2010.

\bibitem{Dumitriu2007} Ioana Dumitriu, Alan Edelman, Gene Shuman, ``MOPS: Multivariate orthogonal polynomials (symbolically),'' {\it Journal of Symbolic Computation}, Volume 42, Issue 6, June, 2007.

\bibitem{Andrews1999} George E. Andrews, Richard Askey, and Ranjan Roy, {\it Special Functions}, Cambridge University Press, 1999.

\end{thebibliography}
\end{document}